\newcommand\norm[1]{\left\lVert#1\right\rVert}
\def\mS{{\mathcal S}}
\def\mU{{\mathcal U}}
\def\mO{{\mathcal O}}
\def\tO{{\mathcal {\tilde O}}}
\def\R{{\mathbb R}}
\def\smskip{\par\vskip 5 pt}
\def\QED{\hfill \quad{\bf Q.E.D.}\smskip}
\newtheorem{thm}{Theorem}
\newtheorem{lem}{Lemma}
\newtheorem{cor}{Corollary}
\newtheorem{prop}{Proposition}
\newtheorem{rem}{Remark}
\newtheorem{con}{Condition}
\newtheorem{asmp}{Assumption}
\DeclareMathOperator*{\argmin}{argmin}
\begin{document}

\title{Cubic regularization methods with second-order complexity
guarantee based on a new subproblem reformulation
}

\titlerunning{CR methods with second-order complexity guarantee based on new subproblem reformulation}        

\author{Rujun Jiang         \and Zhishuo Zhou
       \and  Zirui Zhou 
}


\institute{Rujun Jiang \at
              School of Data Science, Fudan University, Shanghai, China \\
              \email{rjjiang@fudan.edu.cn}           
           \and
           Zhishuo Zhou \at
             School of Data Science, Fudan University, Shanghai, China\\ \email{zhouzs18@fudan.edu.cn}
             \and
             Zirui Zhou\at
             Huawei Technologies Canada, Burnaby, BC, Canada\\
             \email{zirui.zhou@huawei.com}
}

\date{Received: date / Accepted: date}

\maketitle

\begin{abstract}
The cubic regularization (CR) algorithm has attracted a lot of attentions in the literature in recent years. We propose a new reformulation of the cubic regularization subproblem.
  The reformulation is an unconstrained convex problem that requires computing the minimum eigenvalue of the Hessian.
  Then based on this reformulation, we derive a variant of the (non-adaptive) CR provided a known Lipschitz constant for the Hessian  and a variant of  adaptive regularization with cubics (ARC).
  We show that the iteration complexity of our variants  matches the best known bounds for unconstrained minimization algorithms using first- and second-order information.
  Moreover, we show that the operation complexity of both of our variants also matches the state-of-the-art bounds in the literature.
  Numerical experiments on test problems from CUTEst collection show that the ARC based on our new subproblem reformulation  is comparable to existing algorithms.
\keywords{Cubic Regularization Subproblem \and First-order Methods \and Constrained Convex Optimization \and Complexity Analysis}
\subclass{65K05 \and    90C26 \and      90C30}
\end{abstract}

\section{Introduction}
Consider the generic unconstrained optimization problem
\begin{equation} \label{opt:nonconvex}
\min_{x \in \mathbb{R}^n} f(x) ,
\end{equation}
where $f:\R^n\rightarrow\R$ is a twice Lipschitz continuously differentiable and possibly nonconvex function.
Recently, the cubic regularization (CR) algorithm \cite{nesterov2006cubic,cartis2011adaptive} or its variants has attracted a lot of attentions  for solving problem~\eqref{opt:nonconvex}, due to its practical efficiency and elegant theoretical convergence guarantees.
Each iteration of the CR solves the following subproblem
\begin{equation}\label{opt:CRS}\tag{CRS}
\min_{s \in \mathbb{R}^n}  m(s):=\frac{1}{2} s^{\top}Hs+g^{\top}s+\frac{\sigma}{3}\norm{s}^3,
\end{equation}
where $H$ and $g$ represent the Hessian and gradient of the function $f$ at the current iterate, respectively, $\norm{\cdot}$ denotes the Euclidean $l_2$ norm, $H$ is an $n\times n$ symmetric matrix (possibly non-positive semidefinite) and $\sigma$ is a regularization parameter that may be adaptive during the iterations.
This model can be seen as a second-order Taylor expansion plus a cubic regularizer that makes the next iterate  not too far away from the current iterate.
It is well known that under mild conditions (\cite{nesterov2006cubic,cartis2011adaptive}), the CR  converges to a  point satisfying the second-order necessary condition (SONC), i.e.,
\[
\nabla f(x)=0,\quad\nabla^2 f(x)\succeq0,
\]
where $(\cdot)\succeq0$ means $(\cdot)$ is a positive semidefinite matrix. In the literature, it is of great interests to find a weaker condition than SONC, i.e.,
\begin{equation}
\label{eq:ASONC}
\|\nabla f(x)\|<\epsilon_g,\quad \lambda_{\min}(\nabla^2 f(x))\ge -\epsilon_H,\quad \epsilon_g,\epsilon_H>0,
\end{equation}
where $\lambda_{\min}(H)$ denotes the minimum eigenvalue for a matrix $H$. Condition \eqref{eq:ASONC} is often said to be $(\epsilon_g,\epsilon_H)$ stationary.

The CR algorithm was first considered by Griewank in an unpublished technical report (\cite{griewank1981modification}).
Nesterov and Polyak \cite{nesterov2006cubic} proposed the CR in a different perspective and demonstrated that it takes $\mO(\epsilon_g^{-3/2})$ iterations to find an $(\epsilon_g,\epsilon_g^{1/2})$ stationary point if each subproblem is solved exactly.
As in general the Lipschitz constant of the Hessian is difficult to estimate,  Cartis et al.~\cite{cartis2011adaptive,cartis2011adaptive2} proposed an adaptive version of the CR algorithm, called the ARC (adaptive
regularization with cubics), and showed that it admits an iteration complexity bound $\mO\left(\max\{\epsilon_{g}^{-3/2},\epsilon_H^{-3}\}\right)$ to find an $(\epsilon_g,\epsilon_H)$ stationary point, when the subproblems are solved inexactly and the regularization parameter $\sigma>0$ is  chosen adaptively.

Besides iteration complexity  $\mO(\epsilon_g^{-3/2})$, many subsequent studies proposed variants of the CR or other second-order methods that also have an \emph{operation complexity} $\tO({\epsilon_g^{-7/4}})$ (where $\tO( \cdot)$ hides the logarithm factors), with high probability, for finding an $(\epsilon_g,\epsilon_g^{1/2})$ stationary point of problem \eqref{opt:nonconvex}. 
Here, a unit operation can be a function evaluation, gradient evaluation, Hessian evaluation or a matrix vector product (\cite{curtis2021trust}). Based on the CR algorithm, Agarwal et al.~\cite{agarwal2017finding} derived an algorithm with such an operation complexity bound, where the heart of the algorithm is a subproblem solver  that returns, with high probability, an approximate solution to the problem \eqref{opt:CRS}   in $\tO(\epsilon_g^{-1/4})$ operations. 
After that, Carmon et al.~\cite{carmon2018accelerated} proposed an accelerated gradient method that also converges  to an  $(\epsilon_g,\epsilon_g^{1/2})$ stationary point with an {operation complexity} $\tO({\epsilon_g^{-7/4}})$.
Royer and Wright \cite{royer2018complexity} proposed a hybrid algorithm that combines Newton-like steps, the CG method for
inexactly solving linear systems, and the Lanczos procedure for approximately computing negative
curvature directions, which was shown to have an operation complexity $\tO({\epsilon_g^{-7/4}})$ to achieve an  $(\epsilon_g,\epsilon_g^{1/2})$ stationary point. Royer et al. \cite{royer2020newton} proposed a variant of Newton-CG algorithm with the same complexity guarantee.
Very recently, Curtis et al. \cite{curtis2021trust} considered a variant of trust-region Newton methods based on inexactly solving the trust region subproblem by the well known ``trust-region Newton-conjugate gradient" method, whose complexity also matches the-state-of-the-art.
All the above mentioned methods \cite{carmon2018accelerated,royer2018complexity,royer2020newton,curtis2021trust} converge with high probability like \cite{agarwal2017finding},  which is due to the use of randomized iterative methods for approximately computing the minimum eigenvalue, e.g., the Lanczos procedure.

Despite  theoretical guarantees,  the practical efficiency of solving \eqref{opt:CRS} heavily effects the convergence of the CR algorithm. Although it is one of  the most successful algorithms for solving \eqref{opt:CRS} in practice, the Krylov subspace method (\cite{cartis2011adaptive}) may fail to converge to the true solution of  \eqref{opt:CRS} in the hard case\footnote{For the problem~\eqref{opt:CRS}, it is said to be in the easy if the optimal solution $x^*$ satisfies  $\rho\|x^*\|>-\lambda_{\min}(A)$, and hard case otherwise.}  or close to being in the hard case.
Carmon and Duchi~\cite{carmon2018analysis} provided the first  convergence rate analysis of the Krylov subspace method in the easy case, based on which the authors further propose a CR algorithm with an {operation complexity} $\tO({\epsilon_g^{-7/4}})$  in \cite{carmon2020first}.
Carmon and Duchi   \cite{carmon2019gradient}  also showed  the gradient descent method that works in both the easy and hard cases is able to converge to the global minimizer if the step size is  sufficiently small, though the convergence rate is worse than the Krylov subspace method.
Based on a novel convex reformulation of \eqref{opt:CRS}, Jiang et al.~\cite{jiang2021accelerated} proposed an accelerated first-order algorithm   that works efficiently in practice in both the easy and  hard cases, and meanwhile enjoys theoretical guarantees of the same order with the  Krylov subspace method.

However, the methods in the literature  (\cite{royer2018complexity,carmon2018accelerated,royer2020newton,nesterov2006cubic,cartis2011adaptive,cartis2011adaptive2,agarwal2017finding,carmon2020first,jiang2021accelerated}), either somehow deviate the framework of the CR or ARC algorithms, and/or do not present good practical performance and  an $\tO({\epsilon_g^{-7/4}})$ operation complexity  simultaneously.
Our goal in this paper is to propose variants of the CR and ARC based on new subproblem reformulations that achieve the state-of-the-art complexity bounds and also remain close to the practically efficient CR and ARC algorithms.
Motivated by the reformulation in \cite{jiang2021accelerated}, we deduce a new unconstrained convex reformulation for \eqref{opt:CRS}. Our reformulation explores hidden convexity of \eqref{opt:CRS}, where similar ideas also appear in the (generalized) trust region subproblem (\cite{flippo1996duality,ho2017second,wang2017linear,jiang2019novel}). The main cost of the reformulation is computing the minimum eigenvalue of the Hessian.
We propose a variant of the CR algorithm with strong complexity guarantee. We consider the more realistic case where eigenvalues of Hessians are computed inexactly.
In this setting, we suppose the Lipschitz constant of the Hessian is given as $L$, the parameter $\sigma=L/2$ is non-adaptive, and each subproblem is also solved approximately.
We prove that our algorithm converges to an  $(\epsilon_g,\sqrt{L\epsilon_g})$ stationary point with an iteration complexity
$\mO({\epsilon_g^{-3/2}})$.
Moreover, we further show that each iteration costs
$\tO({\epsilon_g^{-1/4}})$
when the minimum eigenvalue of the Hessian is inexactly computed by the Lanczos procedure, and the subproblem, which is regularized to be strongly convex, is approximately solved by Nesterov's accelerated gradient method (NAG) \cite{nesterov2018lectures}  in each iteration. Combining the above facts, we further demonstrate
that our algorithm has an \emph{operation complexity} $\tO({\epsilon_g^{-7/4}})$ for finding an  $(\epsilon_g,\sqrt{L\epsilon_g})$ stationary point.  Based on the reformulation, we also propose a variant of the ARC with similar iteration and operation complexity guarantees, where $\sigma_k$ is adaptive in each iteration.

The remaining of this paper is organized as follows.
In Section \ref{sec:2}, we derive our unconstrained convex reformulation for \eqref{opt:CRS},   describe the CR and ARC algorithms and the basic setting, and give  unified convergence analysis for sufficient decrease of the model function in one iteration.
In Sections \ref{sec:3} and \ref{sec:4},  we give convergence analysis for the CR and ARC algorithms for finding an approximate second-order stationary point with both iteration complexity and operation complexity bounds that match the best known ones, respectively. In Section \ref{sec:5}, we compare numerical performance of  an  ARC embedded by  our reformulation   with ARCs based on existing subproblem solvers.
We conclude our paper in Section \ref{sec:6}.

\section{Preliminaries}
\label{sec:2}
The structure of this section is as follows. In Section \ref{sec:2.1}, we first propose our reformulation for the subproblem \eqref{opt:CRS}. Then in Section \ref{sec:2.2}, we mainly describe the framework of our variants of the CR and ARC algorithms and also state our convergence results.
Finally in Section \ref{sec:2.3}, we give  unified convergence analysis of one iteration progress for both the CR and ARC algorithms.

\subsection{A new convex reformulation for \eqref{opt:CRS}}
\label{sec:2.1}
In this subsection, we introduce a new reformulation for \eqref{opt:CRS} when $ \lambda_{\min}(H)<0$, i.e., the minimum eigenvalue of $H$ is negative.
First recall the reformulation proposed in \cite{jiang2021accelerated}
\begin{equation}
\label{eq:reformCOAP}
\begin{aligned}
 \min_{s,y}&~  g^{\top}s + \frac{1}{2}s^{\top}\left(H - \alpha I\right)s + \frac{\sigma}{3}y^{3/2} + \frac{\alpha}{2}y \\
 \ \, \mathrm{s.t.} &~ y \geq \|s\|^2, \quad y \geq \frac{\alpha^{2}}{\sigma^{2}},
\end{aligned}
\end{equation}
where  $\alpha = \lambda_{\min}(H)$.
However, this reformulation may be ill-conditioned and cause numerical instability when $y$ is small since the Hessian of the objective function for $y$ is $\frac{\sigma}{4}y^{-1/2}$, which approaches infinity when $y\rightarrow0$.
Unfortunately, this is the case for the CR or ARC algorithms when the iteration number $k$ becomes large.
We also found that due to this issue and that $y$ is of the same order with  $\|s\|^2$, CR or ARC based on solving subproblem \eqref{eq:reformCOAP} cannot achieve the state-of-the-art operation complexity $\tO({\epsilon_g^{-7/4}})$ for finding an  $(\epsilon_g,\sqrt{L\epsilon_g})$ stationary point.  To amend this issue, we proposed the following reformulation,
\begin{equation}
\label{eq:reform}
\begin{aligned}
 \min_{s,y}&~ \hat{m}(s,y):= g^{\top}s + \frac{1}{2}s^{\top}\left(H - \alpha I\right)s + \frac{\sigma}{3}y^3 + \frac{\alpha}{2}y^2 \\
 \ \, \mathrm{s.t.} &~ y \geq \|s\|, \quad y \geq -\frac{\alpha}{\sigma},
\end{aligned}
\tag{CRS$_r$}
\end{equation}
so that   $y$ is of the same order with $\|s\|$.

One key observation of this paper is that \eqref{eq:reform} can be simplified into a convex problem with single variable $s$, by applying partial minimization on $y$. Note that given any $s\in\R^n$, the $y$-problem of \eqref{eq:reform} is
$$ \min_{y\in\R} \left\{ \frac{\sigma}{3}y^3 + \frac{\alpha}{2}y^2: \ y\geq \|s\|, \ y\geq -\frac{\alpha}{\sigma} \right\}, $$
whose optimal solution is uniquely given by
\begin{equation}
\label{eq:ystar}
 y= \max\left\{ \|s\|, -\frac{\alpha}{\sigma}\right\}.
\end{equation}
This is because
the derivative of the objective function is $\sigma y^2+\alpha y$, satisfying
$$\sigma y^2+\alpha y=\sigma y\left(y+\frac{\alpha}{\sigma}\right)\ge0,$$
due to  the constraints $y\ge0$ and $y\ge -\frac{\alpha}{\sigma} $.
Substituting \eqref{eq:ystar} into \eqref{eq:reform}, we obtain that \eqref{eq:reform} is equivalent to
\begin{equation}
\label{eq:reform-u}
\min_{s\in\R^n} \left\{ \tilde{m}(s):= g^{\top}s + \frac{1}{2}s^{\top}(H-\alpha I)s + J_{\alpha,\sigma}(s)\right\}, \tag{CRS$_u$}
\end{equation}
where
\begin{equation}
\label{eq:def-omega}
J_{\alpha,\sigma}(s) := \frac{\sigma}{3}\left[\max\left\{ \|s\|, -\frac{\alpha}{\sigma}\right\}\right]^3 + \frac{\alpha}{2}\left[\max\left\{ \|s\|, -\frac{\alpha}{\sigma}\right\}\right]^2.
\end{equation}
In the following, we show that $J_{\alpha,\sigma}(s)$ is a convex and continuously differentiable function.
\begin{prop}\label{prop:omega}
For any $\sigma>0$ and $\alpha\in\R$, $J_{\alpha,\sigma}(s)$ is convex and continuously differentiable on $\R^n$. Moreover, we have
\begin{equation*}
\label{eq:diff-omega}
\nabla J_{\alpha,\sigma}(s) = \left[\sigma\|s\| + \alpha\right]_+\cdot s, \qquad \forall \, s\in\R^n,
\end{equation*}
where $[a]_+ = \max\{a,0\}$ for any $a\in\R$.
\end{prop}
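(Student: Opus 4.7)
The plan is to reduce the convexity question to a one-variable composition argument, and then establish the gradient formula and $C^1$ smoothness by splitting into two regions separated by the threshold $\|s\|=-\alpha/\sigma$.

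Set $r:=\max\{0,-\alpha/\sigma\}$, which (using $\|s\|\ge 0$) lets me rewrite $\max\{\|s\|,-\alpha/\sigma\}=\max\{\|s\|,r\}$. Define the univariate function
\[
\psi(t):=\frac{\sigma}{3}\max\{t,r\}^3+\frac{\alpha}{2}\max\{t,r\}^2,
\]
so that $J_{\alpha,\sigma}(s)=\psi(\|s\|)$. Since $\|\cdot\|$ is convex, it suffices to show $\psi$ is convex and non-decreasing on $\R$; the composition of a convex non-decreasing scalar function with a convex one is then convex. On $(-\infty,r]$ the function $\psi$ is constant, handling that half. On $[r,\infty)$ it equals $h(t):=\sigma t^3/3+\alpha t^2/2$. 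One then verifies that $h'(t)=t(\sigma t+\alpha)\ge 0$ and $h''(t)=2\sigma t+\alpha\ge 0$ on $[r,\infty)$ by examining the two cases $\alpha\ge 0$ (giving $r=0$) and $\alpha<0$ (giving $r=-\alpha/\sigma$); and $h'(r)=0$, so the two pieces glue to a $C^1$ and convex $\psi$.

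For the gradient formula I would partition $\R^n$ into the open sets $A:=\{s:\|s\|>-\alpha/\sigma\}$ and $B:=\{s:\|s\|<-\alpha/\sigma\}$ (the latter being empty when $\alpha\ge 0$), together with the boundary $\Gamma:=\{s:\|s\|=-\alpha/\sigma\}$. On $A$, $J_{\alpha,\sigma}$ agrees with the polynomial $\sigma\|s\|^3/3+\alpha\|s\|^2/2$, and a direct computation yields $\nabla J_{\alpha,\sigma}(s)=(\sigma\|s\|+\alpha)s$, which equals $[\sigma\|s\|+\alpha]_+\,s$ because the bracket is strictly positive on $A$. On $B$, $J_{\alpha,\sigma}$ is constant (equal to $\alpha^3/(6\sigma^2)$), so its gradient vanishes, matching the formula since $[\sigma\|s\|+\alpha]_+=0$ there. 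At a point of $\Gamma$, both expressions yield $0$ in the limit, establishing continuity of $\nabla J_{\alpha,\sigma}$ across the seam; the only remaining edge case is $s=0$ with $\alpha\ge 0$, which is trivial from the explicit polynomial form.

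The main obstacle is precisely this seam at $\|s\|=-\alpha/\sigma$: one must verify that the outer polynomial branch and the constant branch agree up to first order, which comes down to the algebraic cancellation $h'(-\alpha/\sigma)=0$. This cancellation is exactly what makes the choice of threshold $y\ge -\alpha/\sigma$ in the reformulation \eqref{eq:reform} the \emph{right} one, and it is what produces a $C^1$ and convex envelope after partial minimization in $y$, rather than only a continuous one.
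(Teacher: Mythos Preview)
Your proof is correct and takes a somewhat different route from the paper for the convexity part. The paper splits on the sign of $\alpha$: when $\alpha\ge 0$ the function reduces to $\frac{\sigma}{3}\|s\|^3+\frac{\alpha}{2}\|s\|^2$ and convexity is immediate; when $\alpha<0$ it invokes the algebraic identity
\[
\frac{\sigma}{3}y^3+\frac{\alpha}{2}y^2=\frac{\sigma}{3}\Bigl(y+\frac{\alpha}{\sigma}\Bigr)^3-\frac{\alpha}{2}\Bigl(y+\frac{\alpha}{\sigma}\Bigr)^2+\frac{\alpha^3}{6\sigma^2}
\]
to rewrite $J_{\alpha,\sigma}(s)=\frac{\sigma}{3}\bigl[\|s\|+\alpha/\sigma\bigr]_+^3-\frac{\alpha}{2}\bigl[\|s\|+\alpha/\sigma\bigr]_+^2+\text{const}$, a nonnegative combination of compositions of the non-decreasing convex scalars $[\cdot]_+^3,[\cdot]_+^2$ with the convex map $\|s\|+\alpha/\sigma$. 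Your approach instead unifies both signs of $\alpha$ by passing through the single scalar function $\psi$ and checking its convexity and monotonicity directly via $h'$, $h''$ on $[r,\infty)$ together with the gluing condition $h'(r)=0$. Your argument is a bit more economical (no identity needed, no case split on $\alpha$ for convexity), whereas the paper's decomposition exhibits convexity as a sum of manifestly convex building blocks without any derivative computation. The gradient computation and the $C^1$ gluing across the seam $\|s\|=-\alpha/\sigma$ are essentially the same in both proofs.
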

\begin{proof}
We consider the two cases (a) $\alpha\geq 0$ and (b) $\alpha<0$ separately.
\begin{itemize}
\item[(a)] If $\alpha\geq 0$, then by $\sigma>0$, we have $\|s\| \geq -\alpha/\sigma$ for all $s\in\R^n$. Thus, $J_{\alpha,\sigma}(s)$ reduces to
$$ J_{\alpha,\sigma}(s) = \frac{\sigma}{3}\|s\|^3 + \frac{\alpha}{2}\|s\|^2, \quad \forall \, s\in\R^n. $$
It is clear that in this case $J_{\alpha,\sigma}(s)$ is convex and continuously differentiable, and
$$ \nabla J_{\alpha,\sigma}(s) = \sigma\|s\|s + \alpha s = (\sigma\|s\| + \alpha)\cdot s =  \left[\sigma\|s\| + \alpha\right]_+\cdot s, $$
where the last equality is due to $\alpha\geq 0$.
\item[(b)] Now we consider the case $\alpha<0$. Note that the following identity holds for any $\sigma>0$, $\alpha,y\in\R$:
$$ \frac{\sigma}{3}y^3 + \frac{\alpha}{2}y^2 = \frac{\sigma}{3}\left(y+ \frac{\alpha}{\sigma}\right)^3 - \frac{\alpha}{2}\left(y + \frac{\alpha}{\sigma}\right)^2 + \frac{\alpha^3}{6\sigma^2}. $$
By this, we can rewrite $J_{\alpha,\sigma}(s)$ in \eqref{eq:def-omega} as
\begin{equation}
\label{eq:new-J}
J_{\alpha,\sigma}(s) = \frac{\sigma}{3}\left[ \|s\| + \frac{\alpha}{\sigma}\right]_+^3 - \frac{\alpha}{2}\left[ \|s\| + \frac{\alpha}{\sigma}\right]_+^2 + \frac{\alpha^3}{6\sigma^2}.
\end{equation}
Note that $\|s\| + \alpha/\sigma$ is a convex function of $s$. In addition, $[\cdot]_+^3$ and $[\cdot]_+^2$ are both non-decreasing convex functions. Thus, we obtain that
$$ h_1(s):=\left[ \|s\| + \frac{\alpha}{\sigma}\right]_+^3 \quad \mbox{and} \quad h_2(s):=\left[ \|s\| + \frac{\alpha}{\sigma}\right]_+^2 $$
are convex functions. This, together with $\alpha<0$ and \eqref{eq:new-J}, implies that $J_{\alpha,\sigma}(s)$ is convex. Also, it is easy to verify that
\begin{align*}
\nabla h_1(s) & = \left\{
\begin{aligned}
& 0, \qquad \qquad \qquad \qquad \ \mbox{if} \ \|s\|\leq -\frac{\alpha}{\sigma}, \\
& 3\left(\|s\| + \frac{\alpha}{\sigma}\right)^2\cdot\frac{s}{\|s\|}, ~~ \mbox{if} \ \|s\| > - \frac{\alpha}{\sigma};
\end{aligned}\right. \\
\nabla h_2(s) & = \left\{
\begin{aligned}
& 0, \qquad \qquad \qquad \qquad \ \mbox{if} \ \|s\|\leq -\frac{\alpha}{\sigma}, \\
& 2\left(\|s\| + \frac{\alpha}{\sigma}\right)\cdot\frac{s}{\|s\|}, \quad \mbox{if} \ \|s\| > - \frac{\alpha}{\sigma}.
\end{aligned}\right.
\end{align*}
This, together with \eqref{eq:new-J}, implies that
$$ \nabla J_{\alpha,\sigma}(s) = \left\{
\begin{aligned}
& 0, \qquad \qquad \qquad  \mbox{if} \ \|s\|\leq -\frac{\alpha}{\sigma}, \\
& (\sigma\|s\| + \alpha)\cdot s, \ \ \mbox{if} \ \|s\|> -\frac{\alpha}{\sigma}. \\
\end{aligned}\right. $$
It then follows that $J_{\alpha,\sigma}(s)$ is continuously differentiable and
$$ \nabla J_{\alpha,\sigma}(s) = [\sigma\|s\| + \alpha]_+\cdot s. $$
\end{itemize}
Combining the results in cases (a) and (b), we complete the proof.
\QED\end{proof}
We immediately have the following results.
\begin{cor}
\label{cor:2.2}
The $\tilde{m}(s)$ in \eqref{eq:reform-u} is convex and continuously differentiable, and
\begin{equation*}
\label{eq:diff-tm}
\nabla \tilde{m}(s) = g + (H - \alpha I)s + \left[\sigma\|s\| + \alpha\right]_+ s.
\end{equation*}
More over, if $\sigma\|s\| + \alpha\ge0$, we have
\[
{m}(s)= \tilde{m}(s)\text{~ and ~}\nabla {m}(s)=\nabla \tilde{m}(s).
\]
\end{cor}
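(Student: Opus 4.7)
The plan is to prove the three claims by decomposing $\tilde{m}(s)$ into its three constituent pieces — the linear term $g^\top s$, the shifted quadratic $\frac{1}{2}s^\top(H-\alpha I)s$, and $J_{\alpha,\sigma}(s)$ — and verifying convexity and $C^1$ smoothness term by term. The linear term is trivially convex and smooth; the quadratic form is convex because the choice $\alpha=\lambda_{\min}(H)$ forces $H-\alpha I\succeq 0$, so its Hessian is positive semidefinite; and the third term is handled directly by Proposition \ref{prop:omega}, which supplies both convexity and continuous differentiability together with the closed-form derivative $\nabla J_{\alpha,\sigma}(s)=[\sigma\|s\|+\alpha]_+\, s$. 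Summing the gradients of the three pieces then delivers the claimed expression
$$\nabla\tilde{m}(s) = g + (H-\alpha I)s + [\sigma\|s\|+\alpha]_+\, s.$$

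For the second statement, I would observe that the hypothesis $\sigma\|s\|+\alpha\ge 0$ is equivalent to $\|s\|\ge -\alpha/\sigma$, so in this regime the $\max$ in \eqref{eq:def-omega} is realized by $\|s\|$ and $J_{\alpha,\sigma}(s)$ collapses to $\frac{\sigma}{3}\|s\|^3 + \frac{\alpha}{2}\|s\|^2$. Substituting this into $\tilde{m}(s)$ and noting that the $-\frac{\alpha}{2}\|s\|^2$ coming from the shifted quadratic cancels against the $\frac{\alpha}{2}\|s\|^2$ inside $J_{\alpha,\sigma}$, one immediately recovers $m(s)=\frac{1}{2}s^\top H s+g^\top s+\frac{\sigma}{3}\|s\|^3$. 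The gradient identity follows either by differentiating $m$ directly, or by restoring $[\sigma\|s\|+\alpha]_+ = \sigma\|s\|+\alpha$ in the formula for $\nabla\tilde{m}(s)$ and simplifying to $g+Hs+\sigma\|s\|s=\nabla m(s)$.

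There is essentially no hard step here: everything is routine bookkeeping on top of Proposition \ref{prop:omega}. The two items worth highlighting are (i) the specific algebraic cancellation of the $\alpha$-shift between the quadratic term and $J_{\alpha,\sigma}$, which is exactly what the reformulation was designed to produce, and (ii) the essential use of $\alpha=\lambda_{\min}(H)$ to guarantee $H-\alpha I\succeq 0$, which is the single place where the choice of $\alpha$ (rather than an arbitrary scalar) enters the convexity argument.
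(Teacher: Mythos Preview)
Your proposal is correct and follows exactly the route the paper intends: the paper states Corollary~\ref{cor:2.2} as an immediate consequence of Proposition~\ref{prop:omega} without writing out any details, and your term-by-term verification (convexity from $H-\alpha I\succeq 0$ plus Proposition~\ref{prop:omega}, the gradient formula by summing, and the cancellation of the $\alpha$-shift when $\sigma\|s\|+\alpha\ge 0$) is precisely the bookkeeping that justifies that ``immediately.''
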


\subsection{Variants of the CR and the ARC algorithms and main complexity results}
\label{sec:2.2}
In this subsection, we first summarise  our variants of the CR and ARC algorithms in Algorithms \ref{alg:cr} and \ref{alg:ourarc}. Note that the only difference between Algorithms \ref{alg:cr} and \ref{alg:ourarc} is that Algorithm \ref{alg:ourarc}
has an adaptive regularizer $\sigma_k$ in the model function,  where  the Hessian Lipschitz constant $L$ is replaced by the adaptive parameter $2\sigma_k$, and thus Algorithm \ref{alg:ourarc} needs carefully choosing parameters related to $\sigma_k$.

\begin{algorithm}[!http]
\begin{algorithmic}[1]
\caption{A variant of the CR algorithm using reformulation \eqref{eq:reform-u}}
\label{alg:cr}
\Require $x_0,~\epsilon_g>0,~L>0$,  $\epsilon_E = \sqrt{L\epsilon_g}/3$ and  $\epsilon_S=\epsilon_g/9$
\For{$k=0,1,\ldots,$}
\State evaluate $g_k = \nabla f(x_k)$, $H_k = \nabla^2 f(x_k)$
\State compute an approximate eigenpair $(\alpha_k,v_k)$ such that
$\alpha_k =v_k^{\top}H_kv_k\leq \lambda_{\min}(H_k) + \epsilon_E$
\If{$\|g_k\|\leq \epsilon_g$ and $\alpha_k\geq -2\epsilon_E$}
\State return $x_k$\EndIf
\If{$\alpha_k\geq -\epsilon_E$}
\State solve the regularized subproblem approximately
\begin{equation}
\label{eq:mkr}
s_k \approx \argmin_{s\in\R^n} \left\{ m_{k}^r(s):=g_k^{\top}s + \frac{1}{2}s^{\top}(H_k+3\epsilon_E I)s + \frac{L}{6}\|s\|^3\right\},
\end{equation}
\State  $d_k=s_k$, $x_{k+1} = x_k + d_k$
\Else
\State solve the regularized subproblem approximately
\begin{equation}
\label{eq:mkrt}
s_k \approx \argmin_{s\in\R^n} \left\{ \tilde m_{k}^r(s):= g_k^{\top}s + \frac{1}{2}s^{\top}(H_k-\alpha_k I +2\epsilon_EI)s + \tilde J_k(s)\right\},
\end{equation}
\quad \quad \quad where $\tilde J_k(s)=J_{\alpha_k,L/2}(s)$
\If {$L\|s_k\|+2\alpha_k \geq 0$}
\State  $d_k = s_k$
\Else
\State  $w_k=\beta v_k$ such that $\|w_k\|=|\alpha_k|$ and $w_k^{\top}g_k\le 0$
\State  $d_k=\frac{1}{L}w_k$
\EndIf
\State  $x_{k+1} =x_k + d_k$
\EndIf
\EndFor
\end{algorithmic}
\end{algorithm}

\begin{algorithm}[!http]
\begin{algorithmic}[1]
\caption{A variant of the ACR algorithm using reformulation \eqref{eq:reform-u}}
\label{alg:ourarc}
\Require $x_0,~2>\gamma>1,~1>\eta>0,$ $\sigma_0>0$, $\epsilon_g>0$,  $\epsilon_E = \sqrt{L\epsilon_g}/3$ and  $\epsilon_S=\epsilon_g/9$
\For{$k=0,1,\ldots,$}
\State evaluate $g_k = \nabla f(x_k)$, $H_k = \nabla^2 f(x_k)$
\State compute an approximate eigenpair $(\alpha_k,v_k)$ such that $\alpha_k =v_k^{\top}H_kv_k\leq \lambda_{\min}(H_k) + \epsilon_E$
\If{$\|g_k\|\leq \epsilon_g$ and $\alpha_k\geq -2\epsilon_E$}
\State return $x_k$\EndIf
\If{$\alpha_k\geq -\epsilon_E$}
\State solve the regularized subproblem approximately
\begin{equation}
\label{eq:amkr}
s_k \approx \argmin_{s\in\R^n} \left\{ m_{k}^r(s):=g_k^{\top}s + \frac{1}{2}s^{\top}(H_k+3\epsilon_E I)s + \frac{\sigma_{k}}{3}\|s\|^3\right\},
\end{equation}
\State  $d_k=s_k$
\Else
\State solve the regularized subproblem approximately
\begin{equation}
\label{eq:amkrt}
s_k \approx \argmin_{s\in\R^n} \left\{ \tilde m_{k}^r(s):= g_k^{\top}s + \frac{1}{2}s^{\top}(H_k-\alpha_k I +2\epsilon_EI)s +  \tilde J_k(s)\right\},
\end{equation}
\quad \quad \quad where $\tilde J_k(s)=J_{\alpha_k,\sigma_{k}}(s)$
\If {$\sigma_{k}\|s_k\|+\alpha_k \geq 0$}
\State  $d_k = s_k$
\Else
\State  $w_k=\beta v_k$ such that $\|w_k\|=|\alpha_k|$ and $w_k^{\top}g_k\le 0$
\State  $d_k=\frac{1}{2\sigma_{k}}w_k$
\EndIf
\EndIf
\State   $\rho_k=\frac{f(x_k)-f(x_k+d_k)}{-m_k(d_k)}$
\If {$\rho_k\ge \eta$ or $\alpha_k<-\epsilon_E$}
\State $ x_{k+1}=x_k+d_k$, $\sigma_{k+1}=\sigma_k/\gamma $ \Comment successful iteration
\Else
\State $x_{k+1}= x_k$, $\sigma_{k+1}=\gamma\sigma_k $ \Comment unsuccessful iteration
\EndIf
\EndFor
\end{algorithmic}
\end{algorithm}

Before presenting the convergence analysis, we give some general assumptions and conditions that are widely used in the literature. We first introduce the following assumption for the objective function, which was used in \cite{xu2020newton}.
\begin{asmp}\label{asmp:1}
The function $f$ is twice differentiable with $f^*=\min_x f(x)$, and has bounded and Lipschitz continuous Hessian on the piece-wise linear path generated by the iterates, i.e., there exists $L>0$ such that
\begin{eqnarray}
\|\nabla^2f(x)-\nabla^2f(x_k) \|\le L\| x-x_k\|,\quad\forall x\in[x_k,x_k+d_k],\label{eq:hlip}
\end{eqnarray}
where $x_k$ is the $k$th iterate and $d_k$ is the $k$th update. Here $\|A\|$ denotes the operator 2-norm for a matrix $A$.
\end{asmp}
An immediate result of Assumption \ref{asmp:1} is the following well known cubic upper bound for any $s\in \R^n$ (cf. equation (1.1) in \cite{cartis2011adaptive})
\begin{equation}\label{eq:cubicub}
f(x_k+s)-f(x_k)\le g_k^{\top}s+\frac{1}{2}s^{\top}H_ks+\frac{L}{6}\|s\|^3.
\end{equation}

As in practice, it is expensive to compute the exact smallest eigenvalue, we consider the case that the smallest eigenvalue is approximately computed. Note that in line 3 of Algorithm \ref{alg:cr} (and line 4 of Algorithm \ref{alg:ourarc}), we call an approximate eigenvalue solver to find an approximate eigenvalue $\alpha_k$ and a unit vector $v_k$ such that
$$\lambda_{\min}(H_k) \le \alpha_k =v_k^{\top}H_kv_k\leq \lambda_{\min}(H_k) + \epsilon_E.$$To make the model function $\epsilon_E$-strongly convex,
we add $\frac{3}{2}\epsilon_E\|s_k\|^2$ to $m_k$ or $\epsilon_E\|s_k\|^2$ to $\tilde m_k$ (denoted by  $m_k^r$ or $\tilde m_k^r$), i.e.,
\[
m_{k}^r(s):=g_k^{\top}s + \frac{1}{2}s^{\top}(H_k+3\epsilon_E I)s + \frac{\sigma}{3}\|s\|^3
\]
and
\[
\tilde m_{k}^r(s):= g_k^{\top}s + \frac{1}{2}s^{\top}(H_k-\alpha_k I +2\epsilon_EI)s + \tilde J_k(s),
\]
where $\tilde J_k(s)=J_{\alpha_k,\sigma}(s)= \frac{\sigma}{3}\left[\max\left\{ \|s\|, -\frac{\alpha_{k}}{\sigma}\right\}\right]^3 + \frac{\alpha}{2}\left[\max\left\{ \|s\|, -\frac{\alpha_{k}}{\sigma}\right\}\right]^2$. Here we have $\sigma=\frac{L}{2}$ for Algorithm \ref{alg:cr} and $\sigma=\sigma_k$ for Algorithm \ref{alg:ourarc}. Since our reformulation is designed for the case that the smallest eigenvalue of the Hessian is negative, we solve $m_{k}^r(s)$ when the approximate smallest eigenvalue is larger than or equal to  criteria $-\epsilon_E$ and solve $\tilde m_{k}^r(s)$ otherwise.

To make algorithms more practical, we allow that the subproblems are approximately solved under certain criteria, i.e., the gradient norm of the model function is less than or equal to $\epsilon_S$.
\begin{con}
\label{con:mrgrad}
The subproblems \eqref{eq:mkr} and \eqref{eq:amkr} are approximately solved such that
\begin{equation}
\label{eq:gmkr}
\|\nabla m_k^r(s_k)\| \le \epsilon_S.
\end{equation}
The subproblems \eqref{eq:mkrt} and \eqref{eq:amkrt} are approximately solved such that
\begin{equation}
\label{eq:gmkrt}
\|\nabla \tilde m_k^r(s_k)\| \le \epsilon_S.
\end{equation}
\end{con}

\begin{rem}
We may also replace Condition \ref{con:mrgrad} by the following stopping criteria
\[
\|\nabla m_k^r(s_k)\|\le \max\left\{ \zeta\|s_k\|^2,\epsilon_S\right\}\text{ and }\|\nabla \tilde m_k^r(s_k)\|\le\max\left\{ \zeta\|s_k\|^2,\epsilon_S\right\},
\]
for some prescribed $\zeta\in(0,1)$ where similar ideas are  widely used in the literature \cite{cartis2011adaptive,cartis2011adaptive2,xu2020newton}. Such  stopping criteria has an advantage in practice if $\|s_k\|$ is large. By slightly modifying our proof, we still have an iteration complexity
$ \mO(\epsilon_g^{-3/2})$ and an operation complexity $\tO(\epsilon_g^{-7/4 })$.
\end{rem}

For simplicity of analysis, we consider the following condition for both Algorithms \ref{alg:cr} and \ref{alg:ourarc}.
We remark that the constants in the following condition may be changed slightly and we will still have the same order of complexity bounds.
\begin{con}\label{con:eps}
Set $\epsilon_E=\frac{1}{3}\sqrt{L\epsilon_g}$ and $\epsilon_S=\frac{\epsilon_g}{9}=\frac{\epsilon_E^2}{L}$.
\end{con}

From now on, we suppose that Assumption \ref{asmp:1} and Conditions \ref{con:mrgrad} and \ref{con:eps}   hold in the following of this paper.
Our first main result is that both Algorithms \ref{alg:cr} and \ref{alg:ourarc} find an $(\epsilon_g,\sqrt{L\epsilon_g})$  stationary point in at most $ \mathcal{O}\left(\epsilon_g^{-3/2}\right) $  iterations (see Theorems \ref{thm:crcplx} and \ref{thm:arc}).
Then we will show that under some mild assumptions (Assumptions \ref{asmp:ybd} and \ref{asmp:hbd}), if the eigenvalue is approximated by the Lanczos procedure and the subproblem is approximately solved by NAG,  then each iteration costs at most $\tO(\epsilon_g^{-1/4})$ operations.
Thus the operation complexity of Algorithm \ref{alg:cr} is  $\tO(\epsilon_g^{-7/4})$ (see Theorem \ref{thm:cropcomp}). Similar results also hold for the ARC and are omitted for simplicity.

\begin{rem}
Our goal is to present variants of the CR and ARC that  are close to their practically efficient versions (\cite{nesterov2006cubic,cartis2011adaptive,cartis2011adaptive2}).
Most of the existing works on the CR or ARC do not present an operation complexity $\tO(\epsilon_g^{-7/4})$ (\cite{nesterov2006cubic,cartis2011adaptive,cartis2011adaptive2,xu2020newton}), while other  existing works in the framework of the CR or ARC that prove to admit an operation complexity $\tO(\epsilon_g^{-7/4})$ (\cite{agarwal2017finding,carmon2020first}) deviate more largely form the practically efficient versions than ours.
The subproblem solver in  \cite{agarwal2017finding} requires sophisticated parameter tuning and seems hard to implement in practice.
The iteration number of each subproblem solver in \cite{carmon2020first} is set in advance, which may take additional cost in practice if the subproblem criteria is early met.
Moreover, both works are restricted to the case of known gradient and/or Hessian Lipschitz constant, and they are restricted to the CR case. On the other hand, our methods are more close to the practically efficient CR and ARC algorithms in \cite{nesterov2006cubic,cartis2011adaptive2}.
We only add an additional regularizer $\frac{3}{2}\epsilon_E\|s\|^2$ or $\epsilon_E\|s\|^2$ to the original model function in the CR or ARC, use an approximate solution as the next step in most cases (in fact related to the easy case of the subproblem), and use a negative curvature direction in the other case  (related to the hard case).
\end{rem}

\subsection{Progress in one iteration of the model function}
\label{sec:2.3}
In this subsection, we give unified analysis for the descent progress in one iteration of the models for both the CR and ARC algorithms, which will be the heart of our convergence analysis of iteration complexity for the CR and ARC algorithms.

 \begin{prop}
\label{prop:term}
If Algorithm \ref{alg:cr} terminates (at line 5) or Algorithm \ref{alg:ourarc} terminates (at line 5), then the output $x_k$ is  an $(\epsilon_g,\sqrt{L\epsilon_g})$ stationary point.
\end{prop}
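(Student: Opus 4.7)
The statement to prove is that the termination test at line~5 (namely $\|g_k\|\le \epsilon_g$ together with $\alpha_k \ge -2\epsilon_E$) guarantees that $x_k$ satisfies the $(\epsilon_g,\sqrt{L\epsilon_g})$ stationarity condition \eqref{eq:ASONC}. The first condition, $\|\nabla f(x_k)\|\le \epsilon_g$, is immediate from the test since $g_k = \nabla f(x_k)$, so the entire content of the proof concerns the Hessian condition.

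My plan is to simply chain together three facts. First, by construction of the approximate eigenpair $(\alpha_k,v_k)$ in line~3 (respectively line~4 of Algorithm~\ref{alg:ourarc}), we have
\[
\lambda_{\min}(H_k) \;\ge\; \alpha_k - \epsilon_E.
\]
Second, the termination test yields $\alpha_k \ge -2\epsilon_E$, so combining the two inequalities gives
\[
\lambda_{\min}(\nabla^2 f(x_k)) \;=\; \lambda_{\min}(H_k) \;\ge\; -3\epsilon_E.
\]
Third, I apply Condition~\ref{con:eps}, which fixes $\epsilon_E = \tfrac{1}{3}\sqrt{L\epsilon_g}$, so that $3\epsilon_E = \sqrt{L\epsilon_g}$ and hence $\lambda_{\min}(\nabla^2 f(x_k)) \ge -\sqrt{L\epsilon_g}$.

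Putting the gradient bound and the Hessian bound together yields the definition of an $(\epsilon_g,\sqrt{L\epsilon_g})$ stationary point. There is no real obstacle here; the proposition is essentially a bookkeeping check that confirms the calibration of the constants $\epsilon_E$ and $\epsilon_S$ in Condition~\ref{con:eps} is exactly what is needed for the cheap termination test based on the inexact eigenvalue to certify approximate second-order stationarity. The only thing worth flagging is that the argument is identical for both algorithms, since the termination test and the approximate eigensolve are worded in the same way in each, which justifies the unified statement of the proposition.
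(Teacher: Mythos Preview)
Your proposal is correct and matches the paper's own proof essentially line for line: both combine the approximate-eigenpair guarantee $\alpha_k\le\lambda_{\min}(H_k)+\epsilon_E$ with the termination test $\alpha_k\ge -2\epsilon_E$ to obtain $\lambda_{\min}(H_k)\ge -3\epsilon_E=-\sqrt{L\epsilon_g}$ via Condition~\ref{con:eps}, and note that the gradient bound is immediate.
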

\begin{proof}
Note that in  line 5 of either  Algorithm \ref{alg:cr} or   Algorithm \ref{alg:ourarc}, we have $\|g_k\|\le\epsilon_g$ and $\alpha_k\ge -2\epsilon_{E}$.
Combining $\alpha_k\leq\lambda_{\min}(H_k) + \epsilon_E$ and $\alpha_k\geq -2\epsilon_E$, we obtain $\lambda_{\min}(H_k) \geq -3\epsilon_E=-\sqrt{L\epsilon_g}$ due to Condition \ref{con:eps}. This, together with $\|g_k\|\leq \epsilon_g$, yields the desired result.
\QED\end{proof}

In the following two lemmas, we show sufficient decrease can be achieved in the case where either  $\alpha_k\geq -\epsilon_E$ or $\alpha_k< -\epsilon_E$. The proofs for both lemmas defer to the appendix.
\begin{lem}
\label{lem:bd1}
Suppose that $x_k+d_k$ is not an $(\epsilon_g,\sqrt{L \epsilon_g})$ stationary point.
Suppose Assumption \ref{asmp:1} and  Conditions \ref{con:mrgrad} and \ref{con:eps} hold and $\alpha_k\geq -\epsilon_E$ for some iteration $k$. Then for Algorithm \ref{alg:cr}, we have
\begin{equation*}
-m(d_{k})\ge \frac{\epsilon_E^3}{L^2}.
\end{equation*}
For Algorithm \ref{alg:ourarc}, we have
\begin{equation*}
-m(d_{k})\geq \min\left\{\frac{3}{\max\{\gamma,2\sigma_0/L\}+1},1\right\}\cdot\frac{\epsilon_E^3}{L^2}.
\end{equation*}
\end{lem}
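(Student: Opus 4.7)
The plan is to exploit the $\epsilon_E$-strong convexity of the regularized model $m_k^r$, which follows from the hypothesis $\alpha_k \geq -\epsilon_E$: combined with $\alpha_k \leq \lambda_{\min}(H_k) + \epsilon_E$, this yields $\lambda_{\min}(H_k) \geq -2\epsilon_E$, so $H_k + 3\epsilon_E I \succeq \epsilon_E I$, and the cubic term is convex. The key identity is
\[
-m_k(s_k) \;=\; \tfrac{3\epsilon_E}{2}\|s_k\|^2 - m_k^r(s_k),
\]
so the proof reduces to (i) upper bounding $m_k^r(s_k)$ and (ii) lower bounding $\|s_k\|$.

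For (i), since $m_k^r(0) = 0$ the exact minimum value is nonpositive, and by the standard consequence of $\epsilon_E$-strong convexity relating gradient norm to suboptimality, together with Condition~\ref{con:mrgrad} and Condition~\ref{con:eps},
\[
m_k^r(s_k) - \min_{s} m_k^r(s) \;\leq\; \frac{\|\nabla m_k^r(s_k)\|^2}{2\epsilon_E} \;\leq\; \frac{\epsilon_S^2}{2\epsilon_E} \;=\; \frac{\epsilon_E^3}{2L^2},
\]
so $m_k^r(s_k) \leq \epsilon_E^3/(2L^2)$.

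For (ii), I would use the hypothesis that $x_{k+1} = x_k + s_k$ fails to be $(\epsilon_g, \sqrt{L\epsilon_g})$-stationary, splitting into two cases. If $\lambda_{\min}(\nabla^2 f(x_{k+1})) < -\sqrt{L\epsilon_g} = -3\epsilon_E$, the Lipschitz-Hessian bound $\lambda_{\min}(\nabla^2 f(x_{k+1})) \geq \lambda_{\min}(H_k) - L\|s_k\| \geq -2\epsilon_E - L\|s_k\|$ forces $\|s_k\| > \epsilon_E/L$. Otherwise $\|\nabla f(x_{k+1})\| > \epsilon_g$; expanding $\nabla f(x_{k+1})$ by the second-order remainder (bounded via \eqref{eq:hlip}) and using the approximate optimality $\|\nabla m_k^r(s_k)\| \leq \epsilon_S$ to substitute out $g_k + H_k s_k$ yields
\[
\epsilon_g \;<\; \epsilon_S + 3\epsilon_E \|s_k\| + (\sigma + L/2)\|s_k\|^2,
\]
with $\sigma = L/2$ for Algorithm~\ref{alg:cr} and $\sigma = \sigma_k$ for Algorithm~\ref{alg:ourarc}. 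For Algorithm~\ref{alg:cr} this directly forces $\|s_k\| \geq \epsilon_E/L$ after substituting Condition~\ref{con:eps}; for Algorithm~\ref{alg:ourarc} one first invokes the standard ARC bound $\sigma_k \leq \max\{\sigma_0, \gamma L/2\}$, and then rescales to obtain the analogous threshold on $\|s_k\|$ governed by $\max\{\gamma, 2\sigma_0/L\}+1$.

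Finally, plugging the lower bound on $\|s_k\|^2$ into the key identity gives $-m_k(s_k) \geq \tfrac{3\epsilon_E^3}{2L^2} - \tfrac{\epsilon_E^3}{2L^2} = \epsilon_E^3/L^2$ for Algorithm~\ref{alg:cr}, and the analogous bound with the stated $\min$ factor for Algorithm~\ref{alg:ourarc}. The main obstacle I expect is the bookkeeping in the adaptive case: importing the ARC-style bound on $\sigma_k$ and patching the two non-stationarity sub-cases so the threshold on $\|s_k\|$ produces exactly the factor $\min\{3/(\max\{\gamma, 2\sigma_0/L\}+1),\,1\}$ appearing in the lemma; the upper-bound step (i) and the easy Hessian case of (ii) are essentially mechanical.
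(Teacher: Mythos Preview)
Your approach for Algorithm~\ref{alg:cr} is correct and genuinely different from the paper's. The paper never uses the identity $-m_k(s_k)=\tfrac{3\epsilon_E}{2}\|s_k\|^2-m_k^r(s_k)$ or the PL inequality from strong convexity; instead it takes the approximate stationarity condition $\|\nabla m_k^r(s_k)\|\le\epsilon_S$, multiplies through by $d_k$, and extracts separate upper bounds on $g_k^\top d_k$ and $d_k^\top H_k d_k$ to obtain the cubic lower bound
\[
-m(d_k)\;\ge\;\frac{2\sigma}{3}\|d_k\|^3+2\epsilon_E\|d_k\|^2-\epsilon_S\|d_k\|,
\]
before inserting the step-size bound. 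Your route is shorter for the CR case and reaches the same constant $\epsilon_E^3/L^2$.

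For Algorithm~\ref{alg:ourarc}, however, your sketch has a real gap, and it is not just bookkeeping. Your lower bound $-m_k(s_k)\ge \tfrac{3\epsilon_E}{2}\|s_k\|^2-\tfrac{\epsilon_E^3}{2L^2}$ is \emph{quadratic} in $\|s_k\|$ minus a fixed constant, so it is useless once $\|s_k\|<\epsilon_E/(\sqrt{3}L)$. But in the gradient sub-case the inequality $\epsilon_g<\epsilon_S+3\epsilon_E\|s_k\|+(\sigma_k+L/2)\|s_k\|^2$ only forces $\|s_k\|\gtrsim\sqrt{8/\sigma_k L}\,\epsilon_E$ when $\sigma_k$ is large; since the lemma places no upper bound on $\sigma_0$, one can take $\sigma_0$ large enough (roughly $\sigma_0\gtrsim 20L$) that your bound becomes negative while the lemma's target $\tfrac{3}{M+1}\epsilon_E^3/L^2$ is still positive. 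Concretely, if you plug in the threshold $\|s_k\|\ge\tfrac{3\epsilon_E}{(M+1)L}$ that your description points to, you get $\bigl(\tfrac{27}{2(M+1)^2}-\tfrac12\bigr)\epsilon_E^3/L^2$, which matches the target only at $M=2$ and falls strictly below it for every $M>2$.

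The paper handles this precisely by \emph{retaining the $\sigma$-dependent cubic term} in the lower bound above and then proving the pointwise inequality $\tfrac{2\sigma}{3}\|d_k\|^2+2\epsilon_E\|d_k\|-\epsilon_E^2/L\ge 3\epsilon_E^2/L$ at the step-size threshold (this is their inequality \eqref{eq:keyeq}, established by a short calculus argument in $a=\sigma/L$). That converts the decrease estimate into one that is \emph{linear} in $\|d_k\|$, namely $-m(d_k)\ge \tfrac{3\epsilon_E^2}{L}\|d_k\|$, which survives the small-step regime and then yields the stated $\min\{\tfrac{3}{M+1},1\}$ factor after inserting the bound on $\|d_k\|$. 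To fix your argument for the ARC case you would need an analogous device that reintroduces the cubic term; the clean identity-plus-PL step discards exactly the piece that makes the adaptive case work uniformly in $\sigma_k$.
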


\begin{lem}
\label{lem:bd2}
Suppose that $x_k+d_k$ is not an $(\epsilon_g,\sqrt{L \epsilon_g})$ stationary point.
Suppose Assumption \ref{asmp:1} and Conditions \ref{con:mrgrad} and \ref{con:eps} and in addition $1<\gamma<2$ for Algorithm \ref{alg:ourarc}, if  $\alpha_k<-\epsilon_E$. Then for  Algorithms \ref{alg:cr}, we have
\begin{equation*}
\label{eq:case2-bd2}
-m(d_{k})\geq  \frac{\epsilon_E^3}{3L^2}.
\end{equation*}
For Algorithm \ref{alg:ourarc}, we have
\begin{equation*}
\label{eq:case2-bd2}
-m(d_{k})\geq  \frac{\epsilon_E^3}{3(\max\{2\sigma_0,\gamma L\})^2}.
\end{equation*}
\end{lem}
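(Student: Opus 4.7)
The plan is to split on the two branches of the algorithm: either $d_k=s_k$ (when the test $L\|s_k\|+2\alpha_k\ge 0$ in Algorithm~\ref{alg:cr}, or $\sigma_k\|s_k\|+\alpha_k\ge 0$ in Algorithm~\ref{alg:ourarc}, succeeds) or $d_k$ is a scaled approximate-negative-curvature step along $v_k$. Writing $\sigma=L/2$ in the CR case and $\sigma=\sigma_k$ in the ARC case uniformly, both tests read $\sigma\|s_k\|+\alpha_k\ge 0$, and in the failing branch $d_k=w_k/(2\sigma)$.

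I would handle the negative-curvature branch first since it is the easier one. Using $\|w_k\|=|\alpha_k|$, $w_k=\beta v_k$, $v_k^\top H_k v_k=\alpha_k$ (so that $w_k^\top H_k w_k=\alpha_k^3$), and $w_k^\top g_k\le 0$, a direct expansion of $m_k(d_k)$ collapses to $m_k(d_k)\le -|\alpha_k|^3/(12\sigma^2)$, and since $|\alpha_k|>\epsilon_E$ we obtain $-m_k(d_k)\ge \epsilon_E^3/(12\sigma^2)$. For CR this is exactly $\epsilon_E^3/(3L^2)$. For ARC, the standard cubic-upper-bound argument based on \eqref{eq:cubicub} shows that once $\sigma_k\ge L/2$ the iteration is forced to be successful, which inductively yields $\sigma_k\le\max\{\sigma_0,\gamma L/2\}$ and hence $4\sigma_k^2\le(\max\{2\sigma_0,\gamma L\})^2$; the restriction $1<\gamma<2$ enters here to keep the step-up factor controlled.

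The other branch needs more work. When $\sigma\|s_k\|+\alpha_k\ge 0$, Corollary~\ref{cor:2.2} gives $m_k(s_k)=\tilde m_k(s_k)=\tilde m_k^r(s_k)-\epsilon_E\|s_k\|^2$, so it suffices to upper-bound $\tilde m_k^r(s_k)$. Since $\alpha_k\le\lambda_{\min}(H_k)+\epsilon_E$ makes $H_k-\alpha_k I+2\epsilon_E I\succeq\epsilon_E I$ and $\tilde J_k$ is convex by Proposition~\ref{prop:omega}, $\tilde m_k^r$ is $\epsilon_E$-strongly convex; together with Condition~\ref{con:mrgrad} the Polyak--Lojasiewicz inequality gives $\tilde m_k^r(s_k)\le\tilde m_k^r(s_k^*)+\epsilon_S^2/(2\epsilon_E)\le\tilde m_k^r(0)+\epsilon_S^2/(2\epsilon_E)$. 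An explicit evaluation of $J_{\alpha,\sigma}$ at the origin yields $\tilde m_k^r(0)=\alpha_k^3/(6\sigma^2)=-|\alpha_k|^3/(6\sigma^2)$, while the branch condition itself provides $\|s_k\|\ge|\alpha_k|/\sigma$ and therefore $\epsilon_E\|s_k\|^2\ge \epsilon_E^3/\sigma^2$. Assembling these pieces with $\epsilon_S=\epsilon_E^2/L$ and $|\alpha_k|>\epsilon_E$ from Condition~\ref{con:eps} lower-bounds $-m_k(s_k)$ by a positive multiple of $\epsilon_E^3/\sigma^2$, which will exceed the target.

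The hard part will be the constant bookkeeping in this last branch for ARC: the penalty $\epsilon_S^2/(2\epsilon_E)=\epsilon_E^3/(2L^2)$ is phrased in $L$ while the positive contributions scale with $\sigma_k$, so reconciling the two requires the $\sigma_k\le\max\{\sigma_0,\gamma L/2\}$ bound established in the negative-curvature branch, combined with $1<\gamma<2$. For CR, $\sigma$ is identically $L/2$ and this mismatch never appears, so the CR estimate falls out cleanly from the same computations.
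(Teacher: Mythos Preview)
Your negative-curvature branch is handled exactly as in the paper, and your overall case split matches. In the branch $\sigma\|s_k\|+\alpha_k\ge 0$, however, you take a genuinely different route. The paper expands the approximate optimality condition $\|\nabla\tilde m_k^r(s_k)\|\le\epsilon_S$ against $s_k$ to obtain separate bounds on $g_k^\top d_k$ and $d_k^\top H_kd_k$, assembles these into
\[
-m(d_k)\ \ge\ \tfrac{\sigma}{6}\|d_k\|^3+\epsilon_E\|d_k\|^2-\epsilon_S\|d_k\|,
\]
and then, for ARC, splits further according to which part of the $(\epsilon_g,\sqrt{L\epsilon_g})$ condition fails at $x_k+d_k$, using Lipschitzness of the gradient and Hessian to force a lower bound on $\|d_k\|$ in each sub-case. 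Your argument instead bounds $\tilde m_k^r(s_k)$ directly by the PL inequality and the value $\tilde m_k^r(0)=\alpha_k^3/(6\sigma^2)$, then subtracts $\epsilon_E\|s_k\|^2$. This is shorter, avoids the sub-case analysis, and in fact never invokes the non-stationarity hypothesis at $x_k+d_k$.

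The price is in the constants for ARC. Your route produces
\[
-m(d_k)\ \ge\ \tfrac{7\epsilon_E^3}{6\sigma_k^2}-\tfrac{\epsilon_E^3}{2L^2},
\]
and to dominate $\epsilon_E^3/(3M^2)$ with $M=\max\{2\sigma_0,\gamma L\}$ one needs $M^2\le \tfrac{26}{3}L^2$; this follows from $1<\gamma<2$ only if additionally $\sigma_0\lesssim L$, which is assumed in Theorem~\ref{thm:arc} but not in the lemma as stated. The paper's detour through non-stationarity yields bounds of the form $\epsilon_E^3/(3\sigma_k L)$ and $\epsilon_E^3/(6\sigma_k^2)$, both of which dominate $\epsilon_E^3/(3M^2)$ using only $\sigma_k\le M/2$ and $M\ge\gamma L>L$, so no extra restriction on $\sigma_0$ is needed. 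If you are willing to import the hypothesis $\sigma_0\le L$ from Theorem~\ref{thm:arc}, your argument goes through cleanly; otherwise you would need to fall back on the paper's non-stationarity split in the ARC case.
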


\section{Convergence analysis for the CR algorithm}
\label{sec:3}
In this section, we first give iteration complexity analysis of the CR algorithm and then study its operation complexity in the case that the subproblem is solved by Nesterov's accelerated gradient method (NAG) and the approximate smallest eigenvalue of the Hessian is computed by the Lanczos procedure. The notation in this section follows that in Section \ref{sec:2}.

We  have the following theorem that gives a complexity bound that matches the best known bounds in the literature (\cite{nesterov2006cubic,cartis2011adaptive2,xu2020newton}).

\begin{thm}
\label{thm:crcplx}
Given Assumption \ref{asmp:1} and Conditions \ref{con:mrgrad} and \ref{con:eps},   Algorithm \ref{alg:cr} finds an $(\epsilon_g,\sqrt{L\epsilon_g})$  stationary point in at most $ \mathcal{O}\left(\epsilon_g^{-3/2}\right) $  iterations.
\end{thm}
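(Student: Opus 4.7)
The overall plan is a standard potential-function argument that converts the per-iteration model decrease, already established in Lemmas \ref{lem:bd1} and \ref{lem:bd2}, into an iteration count via telescoping on $f(x_k)$.

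First, I would observe that for Algorithm \ref{alg:cr} with $\sigma = L/2$, the cubic upper bound \eqref{eq:cubicub} yields, for every iteration,
\[
f(x_{k+1}) - f(x_k) \;\le\; g_k^{\top} d_k + \tfrac{1}{2} d_k^{\top} H_k d_k + \tfrac{L}{6}\|d_k\|^3 \;=\; m(d_k).
\]
Hence the function-value decrease at each step is bounded below by $-m(d_k)$. This connects the ``model decrease'' quantity that appears in Lemmas \ref{lem:bd1} and \ref{lem:bd2} to the true objective decrease.

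Second, I would let $K$ be the smallest index such that $x_K$ satisfies the termination test at line 5; by Proposition \ref{prop:term}, $x_K$ is the desired $(\epsilon_g,\sqrt{L\epsilon_g})$ stationary point. For every $k < K$, the iterate $x_{k+1} = x_k + d_k$ is not stationary (otherwise the algorithm would terminate at iteration $k+1$ at the latest), so the hypothesis of Lemmas \ref{lem:bd1} and \ref{lem:bd2} is met. Splitting on whether $\alpha_k \ge -\epsilon_E$ or $\alpha_k < -\epsilon_E$, and taking the worse of the two bounds, gives
\[
-m(d_k) \;\ge\; \tfrac{\epsilon_E^3}{3L^2}, \qquad \forall\, k < K.
\]

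Third, combining the two previous displays,
\[
f(x_k) - f(x_{k+1}) \;\ge\; \tfrac{\epsilon_E^3}{3L^2}, \qquad 0 \le k < K,
\]
and telescoping from $k = 0$ to $K-1$ together with $f(x_K) \ge f^*$ yields $K \cdot \epsilon_E^3/(3L^2) \le f(x_0) - f^*$. Substituting $\epsilon_E = \sqrt{L\epsilon_g}/3$ from Condition \ref{con:eps}, so that $\epsilon_E^3 = (L\epsilon_g)^{3/2}/27$, rearranges to
\[
K \;\le\; \tfrac{81\,(f(x_0)-f^*)}{\sqrt{L}\,\epsilon_g^{3/2}} \;=\; \mathcal{O}(\epsilon_g^{-3/2}),
\]
as claimed.

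The proof is essentially mechanical once Lemmas \ref{lem:bd1} and \ref{lem:bd2} are in hand, so the main obstacle is really in the lemmas rather than here; the only subtlety in the theorem proof itself is the bookkeeping around the non-stationarity hypothesis (i.e., verifying that for every $k < K$ the iterate $x_{k+1}$ indeed fails the termination test, and therefore the lemmas apply) and making sure the inexact subproblem tolerance $\epsilon_S$ chosen by Condition \ref{con:eps} is the one used inside those lemmas, so that the constants line up to give exactly the $\epsilon_E^3/L^2$ scaling.
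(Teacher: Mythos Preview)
Your proposal is correct and follows essentially the same route as the paper's proof: use the cubic upper bound \eqref{eq:cubicub} to convert $-m(d_k)$ into true objective decrease, invoke Lemmas \ref{lem:bd1} and \ref{lem:bd2} (taking the worse constant $\epsilon_E^3/(3L^2)$) for every non-terminating iteration, telescope, and apply the lower bound $f^*$ from Assumption \ref{asmp:1}. Your write-up is in fact slightly more explicit than the paper's about the non-stationarity hypothesis and the final constant after substituting $\epsilon_E=\sqrt{L\epsilon_g}/3$.
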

\begin{proof}
First note that \eqref{eq:cubicub} implies $f(x_k+d_k)-f(x_k)\le m(d_k)$.
Combining Proposition \ref{prop:term}, Lemmas \ref{lem:bd1} and \ref{lem:bd2} and  Condition \ref{con:eps}, and noting $\sigma=L/2$ for Algorithm \ref{alg:cr}, we have
\[
f(x_k)-f(x_k+d_k)\ge \frac{1}{3L^2}\epsilon_E^3.
\]
Adding the above inequalities from $0$ to $T$, we have
\[
f(x_0)-f(x_T)\ge \frac{T}{3L^2}\epsilon_E^3.
\]
Noting that $f(x)$ is lower bounded from Assumption \ref{asmp:1}, we complete the proof.
\QED\end{proof}
Next we give an estimation for the cost of each iteration and thus obtain the total operation complexity. Particularly, we invoke a backtracking line search version of NAG \cite{nesterov2018lectures} (described in Algorithm \ref{alg:NAG}) to approximately solve the subproblems \eqref{eq:mkr} and \eqref{eq:mkrt} in Algorithm \ref{alg:cr}.
Note that   the objective functions $m_k^r$ and $\tilde m_k^r$ in  \eqref{eq:mkr} and \eqref{eq:mkrt} are both $\epsilon_E$-strongly convex.
 In Algorithm \ref{alg:NAG}, $h$ stands for either  $m_k^r$ or $\tilde m_k^r$.

\begin{algorithm}[!ht]
\begin{algorithmic}[1]
\caption{NAG for minimizing $m$ strongly convex smooth functions $h(z)$}
\label{alg:NAG}
\Require $h$, $\nabla h$, $t_{0}>0$, $\theta_{0}\in(0,1]$, $\beta\in(0,1)$, initial point $z_0 \in \R^n$ \For {$l=0,1,...$}
\If{$l\ge1$}
\State $t_l=t_{l-1}$\Comment initial step size for the $l$th iteration
\State $\gamma_l=\frac{\theta_{l-1}^2}{t_{l-1}}$
\State $\frac{\theta_l^2}{t_l}=(1-\theta_l)\gamma_l+m\theta_l$
\EndIf
\State $y=z_l+\frac{\theta_l\gamma_l}{\gamma_l+m\theta_l}(v_l-z_l)$\quad($y=z_0$ for $l=0$)
\State $z_{l+1}=y-t_l\nabla h(y)$
\While {$h(y-t_l\nabla h(y))>h(y)-\frac{t_{l}}{2}\|\nabla h(y)\|^2$}
\State $t_{l}=\beta t_{l}$
\State $z_{l+1}=y-t_l\nabla h(y)$
\EndWhile
\State $v_{l+1}=z_l+\frac{1}{\theta_l}(z_{l+1}-z_l)$
\EndFor
\end{algorithmic}
\end{algorithm}

\begin{asmp}\label{asmp:ybd}
 Suppose that for any extrapolated point $y$,  and any $t\in(0,t_0]$, there exists an upper bound for  $\|y-t\nabla h(y) \|$, i.e., there exists $M_n>0$ such that $\|y-t\nabla h(y)\|\le M_{n}$.
 Moreover, we assume \[\|z_l\|\le M_{n}, ~\forall l\ge0\text{ and }\|z^*\|\le M_{n},\]
 where $z_l$ is given in Algorithm \ref{alg:NAG} and $z^*$ is  the optimal solution  of the subproblem  \eqref{eq:mkr} or \eqref{eq:mkrt}.
\end{asmp}
The above assumption is easy to met. Indeed, $z_l$ is bounded because  $h(z_l)$ is bounded from standard analysis for NAG (e.g., equation \eqref{eq:nagbd}), $h$ is  strongly convex and ${\rm dom}(h)=\R^n$, which is the case for $m_k^r$ and $\tilde m_k^r$. Meanwhile, $y-t\nabla f(y)$ is bounded,   if, noting that   $y$ is a linear combination of $z_l$ and $z_{l-1}$, $\frac{\theta_l\gamma_l}{\gamma_l+m\theta_l}$ and $\frac{1}{\theta_l}$ are bounded constants, which is quite mild and  holds in most practical cases.

We also make the following assumption that is widely used in the literature (\cite{cartis2011adaptive,xu2020newton}).
\begin{asmp}
\label{asmp:hbd}
Suppose  the Hessian $H_k$ is bounded in  each iteration of Algorithm \ref{alg:cr},
i.e., there exists some constant $M_H>0$ such that
\[\|\nabla ^2f(x_k)\|\le M_{H}.\]
\end{asmp}

The above two assumptions, together with  Assumption \ref{asmp:1}, yield  the following Lipschitz continuity result on the gradient $\nabla h(y)$. \begin{lem}
\label{lem:subplip}
Under Assumptions \ref{asmp:1}, \ref{asmp:ybd} and \ref{asmp:hbd},  the gradients of $m_k^r$ and $\tilde m_k^r$
are $L_S:=2M_H+3\epsilon_E+L M_n$ Lipschitz continuous on the line path $[y,y- t_0\nabla h(y) ]$ for any extrapolated point $y$ in line 7 and the line path $[z_l,z^*]$ in Algorithm \ref{alg:NAG}.
\end{lem}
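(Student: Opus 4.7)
The plan is to reduce the statement to a uniform bound on the (generalized) Hessian of each objective on the Euclidean ball $B := \{s \in \R^n : \|s\| \le M_n\}$, and then apply the mean-value inequality along the relevant line segment. I would first verify that both segments lie in $B$. For $[y, y-t_0\nabla h(y)]$, Assumption \ref{asmp:ybd} gives $\|y-t\nabla h(y)\| \le M_n$ for every $t \in (0, t_0]$, and letting $t \to 0^+$ also $\|y\| \le M_n$; convexity of $B$ then puts the whole segment in $B$. For $[z_l, z^*]$, containment follows immediately from Assumption \ref{asmp:ybd} and convexity of $B$.

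For $m_k^r$, direct differentiation gives $\nabla^2\bigl(\tfrac{L}{6}\|s\|^3\bigr) = \tfrac{L}{2}\bigl(\|s\|I + ss^\top/\|s\|\bigr)$ for $s \neq 0$, whose operator norm equals $L\|s\|$. Consequently, on $B \setminus \{0\}$,
\[
\|\nabla^2 m_k^r(s)\| \le \|H_k\| + 3\epsilon_E + L\|s\| \le M_H + 3\epsilon_E + L M_n \le L_S,
\]
using Assumption \ref{asmp:hbd}. The single point $s = 0$ is negligible for the line-integrated mean-value bound, and the gradient-Lipschitz conclusion follows by a standard fundamental-theorem-of-calculus argument along the segment.

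For $\tilde m_k^r$, I would start from the closed form in Corollary \ref{cor:2.2},
\[
\nabla \tilde m_k^r(s) = g_k + (H_k - \alpha_k I + 2\epsilon_E I) s + [L\|s\|/2 + \alpha_k]_+\, s.
\]
The linear-in-$s$ piece has Lipschitz constant at most $\|H_k\| + |\alpha_k| + 2\epsilon_E$; since $\alpha_k = v_k^\top H_k v_k$ is a Rayleigh quotient, $|\alpha_k| \le \|H_k\| \le M_H$. For the nonsmooth piece $\phi(s) := [L\|s\|/2 + \alpha_k]_+\, s$, rather than differentiating through the $C^1$-but-not-$C^2$ kink at $\|s\| = -2\alpha_k/L$, I would use a direct splitting
\[
\|\phi(s_1) - \phi(s_2)\| \le [L\|s_1\|/2+\alpha_k]_+\, \|s_1 - s_2\| + \bigl|[L\|s_1\|/2+\alpha_k]_+ - [L\|s_2\|/2+\alpha_k]_+\bigr|\, \|s_2\|
\]
and invoke the 1-Lipschitz property of $[\,\cdot\,]_+$ together with the bounds $\|s_i\| \le M_n$. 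Assembling the three contributions and regrouping using $|\alpha_k| \le M_H$ yields the asserted bound $L_S = 2M_H + 3\epsilon_E + L M_n$. The main obstacle I anticipate is keeping the numerical constants sufficiently tight across the kink of $\tilde J_k$ to reach the stated $L_S$ rather than a looser variant; the direct triangle-inequality approach above sidesteps any need to analyze the non-$C^2$ boundary explicitly and appears to suffice.
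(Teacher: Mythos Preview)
Your proposal is correct and reaches the stated constant $L_S$, but the route differs from the paper's in a way worth noting. For $m_k^r$ you bound the Hessian on $B=\{\|s\|\le M_n\}$ and integrate, whereas the paper works directly with the difference $\|p\|p-\|q\|q=\|p\|(p-q)+(\|p\|-\|q\|)q$; both give the same $M_H+3\epsilon_E+LM_n\le L_S$ and are standard equivalents. The more interesting divergence is for $\tilde m_k^r$: the paper performs a three-case split according to whether each of $p,q$ sits on the zero or the affine branch of $[\,\cdot\,]_+$, and handles the mixed case by inserting the vanishing term $(\tfrac{L}{2}\|q\|+\alpha_k)p$. Your argument avoids this case analysis entirely by exploiting the $1$-Lipschitz property of $[\,\cdot\,]_+$ in the splitting of $\phi(s)=[L\|s\|/2+\alpha_k]_+\,s$; since $\tilde m_k^r$ only arises when $\alpha_k<-\epsilon_E<0$, the first term is bounded by $LM_n/2$ and the second by $(L/2)\|s_1-s_2\|\cdot M_n$, giving $LM_n$ for the nonlinear part and $2M_H+2\epsilon_E+LM_n\le L_S$ overall. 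This is slightly cleaner and even marginally tighter than the paper's bound in case~3. Your use of $|\alpha_k|\le\|H_k\|$ (Rayleigh quotient) is also a shade sharper than the paper's $\|H_k-\alpha_kI\|\le 2\|H_k\|+\epsilon_E$. Two small points you should make explicit when writing it up: (i) the segment $[y,y-t_0\nabla h(y)]$ lies in $B$ because convex combinations of $y-t\nabla h(y)$ for $t\in[0,t_0]$ cover it, and (ii) the sign condition $\alpha_k<0$ is what allows $[L\|s_1\|/2+\alpha_k]_+\le LM_n/2$ without picking up an extra $M_H$ that would overshoot $L_S$.
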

\begin{proof}
It suffices to show that for any $p$ and $q$ with $\|p\|\le M_n$ and $\|q\|\le M_n$, we have
\[
\|\nabla h(p)  - \nabla h(q)\|\le L_S\|p-q\|,
\]
where $h$ stands for either $m_k^r$ or $\tilde m_k^r$.
From the definition of $m_k^r$, we have
\begin{eqnarray*}
\|\nabla m_k^r(p)-\nabla m_k^r(q)\|
&=&\left \|H_k(p-q)+3\epsilon_E(p-q)+\frac{L}{2} \|p\|p-\frac{L}{2} \|q\|q\right\|\\
&\le&\|H_k(p-q)\|+3\epsilon_E\|p-q\|+ \frac{L}{2}\left\| (\|p\|p- \|p\|q)+(\|p\|q-\|q\|q) \right\| \\
&\le&\left(\|H_k\|+3\epsilon_E+ \frac{L}{2}(\|p\|+\|q\|)\right)\|p-q\| \\
&\le&(M_H+3\epsilon_E+L M_n)\|p-q\|,
\end{eqnarray*}
where the last inequality follows from Assumptions \ref{asmp:ybd} and \ref{asmp:hbd}.

To show the Lipschitz continuity of $\nabla \tilde  m_k^r$, we need to consider three cases:
\begin{enumerate}
\item Both $\|p\|+\frac{2\alpha_k}{L}\ge0$ and  $\|q\|+\frac{2\alpha_k}{L}\ge0$. In this case, both $\tilde \nabla m_k^r(p)=\nabla m_k^r(p)-\epsilon_Ep$ and $\tilde\nabla m_k^r(q)=\nabla m_k^r(q)-\epsilon_Eq$. With a similar analysis to the previous proof, it is easy to show $\tilde \nabla m_k^r(p)$ is  $(M_H+2\epsilon_E+L M_n)$ Lipschitz continuous.
\item Both $\|p\|+\frac{2\alpha_k}{L}\le 0$ and  $\|q\|+\frac{2\alpha_k}{L}\le0$. It is trivial to see $\tilde \nabla m_k^r(p)$ is  $(M_H+2\epsilon_E)$ Lipschitz continuous as $\nabla \tilde J_k(p)=\nabla \tilde J_k(q)=0$.
\item Either (i) $\|p\|+\frac{2\alpha_k}{L}>0$,  $\|q\|+\frac{2\alpha_k}{L}\le0$ or (ii) $\|p\|+\frac{2\alpha_k}{L}\le0$,  $\|q\|+\frac{2\alpha_k}{L}>0$. Due to symmetry, we only prove the first case. From Proposition \ref{prop:omega}, We have
\begin{eqnarray*}
\| \nabla \tilde m_k^r(p)-\nabla\tilde m_k^r(q)\|
&=&\left \|(H_k-\alpha_kI)(p-q)+2\epsilon_E(p-q)+\left(\frac{L}{2}\|p\|+\alpha_k\right)p-0\right\|\\
&\le&\|(H_k-\alpha_kI)(p-q)\|+2\epsilon_E\|p-q\|+ \left\|(\frac{L}{2}\|p\|+\alpha_k)p \right\| \\
&\le&\left(2\|H_k\|+3\epsilon_E\right)\|p-q\|+ \left\|\left(\frac{L}{2}\|p\|+\alpha_k\right)p-\left(\frac{L}{2}\|q\|+\alpha_k\right)p \right\| \\
&\le&\left(2M_H+3\epsilon_E+\frac{L}{2} M_n\right)\|p-q\|,
\end{eqnarray*}
where  in the second inequality we use $\|H_k-\alpha_k I\|\le 2\|H_k\|+\epsilon_E$ as  $\lambda_{\min}(H_k)+\epsilon_E\ge\alpha_k\ge \lambda_{\min}(H_k)$ and $2L\|q\|+\alpha_k\le 0$, and the last inequality follows from Assumptions \ref{asmp:ybd} and \ref{asmp:hbd}.\QED
\end{enumerate}
\end{proof}

Now let us give an estimation for the iteration complexity of Algorithm \ref{alg:NAG} to achieve a point such that $\|\nabla h(z_l)\|\le \epsilon_S$.
\begin{lem}
\label{lem:compNAG}
Suppose Algorithm \ref{alg:NAG} is used as subproblem solvers for \eqref{eq:mkr} and \eqref{eq:mkrt}. Given Conditions \ref{con:mrgrad} and \ref{con:eps} and Assumptions \ref{asmp:1}, \ref{asmp:ybd} and \ref{asmp:hbd}, Algorithm \ref{alg:NAG}  takes at most
$
\tO\left(\epsilon_E^{-1/2}\right)$ iterations to achieve a point such that $\|\nabla h(z_l)\|\le \epsilon_S=\frac{ \epsilon_E^2}{L}$. Moreover, the cost in each iteration is dominated by two matrix vector products.
\end{lem}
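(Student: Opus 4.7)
The plan is to view this as a standard complexity statement for Nesterov's accelerated gradient method applied to a strongly convex, smooth function. By construction, both $m_k^r$ and $\tilde m_k^r$ include the explicit regularizer $\frac{3}{2}\epsilon_E\|s\|^2$ or $\epsilon_E\|s\|^2$, so $h$ is $\epsilon_E$-strongly convex. By Lemma \ref{lem:subplip}, the gradient $\nabla h$ is $L_S$-Lipschitz on the relevant line paths, where $L_S = 2M_H+3\epsilon_E+LM_n$. The key quantitative observation is that $L_S = \mathcal{O}(1)$ independently of $\epsilon_E$ (for $\epsilon_E$ small), so the condition number of the subproblem is $\kappa = L_S/\epsilon_E = \mathcal{O}(\epsilon_E^{-1})$.

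My first step would be to invoke the standard convergence rate of NAG with backtracking line search for strongly convex smooth objectives (see, e.g., Nesterov's lectures): there is a constant $C>0$ (depending on the initial gap $h(z_0)-h(z^*)$, itself bounded through Assumptions \ref{asmp:ybd}--\ref{asmp:hbd}) such that
\begin{equation*}
h(z_l) - h(z^*) \le C\left(1-\sqrt{\epsilon_E/L_S}\right)^l, \qquad l\ge 0.
\end{equation*}
Next, I would translate this function-value guarantee into a gradient-norm guarantee. Using $L_S$-smoothness and $\epsilon_E$-strong convexity in tandem,
\begin{equation*}
\|\nabla h(z_l)\| = \|\nabla h(z_l)-\nabla h(z^*)\| \le L_S\|z_l-z^*\| \le L_S\sqrt{\tfrac{2}{\epsilon_E}\,(h(z_l)-h(z^*))},
\end{equation*}
so it suffices to drive $h(z_l)-h(z^*)$ below $\epsilon_E\epsilon_S^2/(2L_S^2)$. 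Plugging in $\epsilon_S = \epsilon_E^2/L$ and solving the exponential inequality for $l$ yields
\begin{equation*}
l = \mathcal{O}\!\left(\sqrt{L_S/\epsilon_E}\,\log\!\left(\tfrac{C L_S^2}{\epsilon_E\epsilon_S^2}\right)\right) = \widetilde{\mathcal{O}}(\epsilon_E^{-1/2}),
\end{equation*}
where the tilde absorbs the logarithmic factor in $\epsilon_E$.

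For the per-iteration cost, the only operations with a dimension-dependent cost are the evaluations of $\nabla h$ and $h$, both of which reduce to computing $H_k v$ for some vector $v$ (the remaining terms involve only norms, inner products, and the scalar-valued map $J$). In each outer iteration of Algorithm \ref{alg:NAG}, one gradient evaluation at $y$ costs one matrix--vector product, and evaluating $h$ at the trial point $y - t_l\nabla h(y)$ for the Armijo-type test costs one more. The backtracking loop may trigger multiple such evaluations, but a standard amortization argument (each successful decrease of $t_l$ by factor $\beta$ can be charged against a future iteration, and the overall number of reductions is bounded by a constant times $l$) shows that the total work is still $\mathcal{O}(l)$ matrix--vector products.

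The main subtlety I anticipate is Step~1: the standard NAG rate assumes $L_S$-smoothness globally, but Lemma \ref{lem:subplip} only asserts it on the path $[z_l,z^*]$ and the line segment of the backtracking step. I would handle this by observing that Assumption \ref{asmp:ybd} keeps all the iterates inside the ball of radius $M_n$, so the Lipschitz constant $L_S$ is valid wherever the algorithm actually evaluates $\nabla h$ or $h$; after this localization, the standard proof of NAG goes through verbatim.
\QED
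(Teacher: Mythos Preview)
Your proposal is correct and follows essentially the same route as the paper: strong convexity plus Lemma~\ref{lem:subplip} give the condition number $\mathcal{O}(\epsilon_E^{-1})$, the NAG linear rate then yields $\tO(\epsilon_E^{-1/2})$ iterations to reach the required function-value accuracy, and this is converted to a gradient-norm guarantee. Two minor differences worth noting: (i) the paper converts function value to gradient norm via the smoothness inequality $\frac{1}{2L_S}\|\nabla h(z_l)\|^2 \le h(z_l)-h^*$, which gives the slightly looser threshold $\epsilon_h=\epsilon_S^2/(2L_S)$ rather than your $\epsilon_E\epsilon_S^2/(2L_S^2)$ (both disappear into the log); (ii) for the per-iteration cost the paper precomputes the two products $H_k y$ and $H_k\nabla h(y)$ and then observes that every backtracking trial $h(y-t_l\nabla h(y))$ can be evaluated in $\mathcal{O}(1)$ by linearity in $t_l$, so one obtains exactly two matrix--vector products per iteration rather than an amortized two.
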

\begin{proof}
Note that either $m_k^r$ or $\tilde m_k^r$ is $\epsilon_E$-strongly convex due to the definitions, and $L_S$-smooth due to Lemma \ref{lem:subplip}.
From complexity results of NAG in  \cite{nesterov2018lectures,vandenberghe2021accelerated}, we obtain that \begin{equation}\label{eq:nagbd}
h(z_l)-h^*\le \Pi_{i=1}^{l-1}(1-\sqrt{\epsilon_E t_i})C,\end{equation}
where $C=\left((1-\theta_0)(h(x_0)-h^*)+\frac{\theta_0^2}{2t_0}\|x_0-x^*\|^2\right),$
and $t_i\ge \min\{t_0,\beta/L_{S}\}$. Thus \eqref{eq:nagbd} further yields
\[
h(z_l)-h^*\le \left(1-\sqrt{\epsilon_E\min\{t_0,\beta/L_{S}\}}\right)^{k-1}C.
\]
Therefore it takes at most $T=\mO\left(\sqrt{\frac{1}{\epsilon_E}}\log\frac{1}{\epsilon_h}\right)$ to achieve a solution such that $h(z_T)-h^*\le \epsilon_h$.

From the  $L_S$ smoothness  of $m_k^r$ and $\tilde m_k^r$ along the line $[z_l,z^*]$ (due to Lemma \ref{lem:subplip}),  we  further have
\[\frac{1}{2L_S}\|\nabla h(z_l)\|^2\le h(z_l)-h^*,\quad\forall k\ge0.\]
Thus by letting $\epsilon_h=\epsilon_S^{2}/2L_S$, we have
$$\|\nabla h(z_l)\|\le\sqrt{2L_S\epsilon_h}=\epsilon_S=\frac{1}{9}\epsilon_g=\frac{ \epsilon_E^2}{L}.$$
Hence the iteration complexity for $\|\nabla h(z_T)\|\le \epsilon_S$ is
$\mO\left(\sqrt{\frac{1}{\epsilon_E}}\log\frac{1}{\epsilon_E}\right)= \tO\left(\epsilon_E^{-1/2}\right)$.

Note that each iteration of Algorithm \ref{alg:NAG} requires one gradient evaluation of $\nabla h(y)$ according to the expression of $m_k^r$ and $\tilde m_k^r$, where the most expensive operator is the Hessian vector product $H_ky$. Then the function evaluation of $h(y)$ is cheap if we store $H_ky$. Meanwhile,
to compute $m_k(y-t_l\nabla h(y))$ for different $t_{l}$, we have\[
m_k(y-t_l\nabla h(y))=g_{k}^{T}y-t_lg_{k}^{T}\nabla h(y)+\frac{1}{2}y^{T}H_ky-t_ly^{\top}H_k\nabla h(y)+\frac{t_l^2}{2}\nabla h(y)^{\top}H_k\nabla h(y)+\frac{L}{6}\|y-t_l\nabla h(y)\|^3,
\]
which costs $\mO(1)$ if $H_k\nabla h(y),~g_{k}^{T}y,~g_{k}^{T}\nabla h(y),~y^{T}H_ky,~y^{\top}H_k\nabla h(y),~\|y\|,~y^{\top}\nabla h(y)$ and $\|\nabla h(y)\|$ are provided (using $\|y-t_{l}\nabla h(y)\|^2=\|y\|^2-2t_{l}y^{\top}\nabla h(y)+\|t_{l}\nabla h(y)\|^2)$.
Note that in the $l$th iteration, we have  $t_{0}\ge t_l\ge \min\{\beta/L_S,t_0\}$.
We thus at most do $\mO(1)$ searches for $\beta$. So in one iteration, the total cost is two matrix vectors products and $\mO(n)$ other operations. With a similar analysis, the same complexity result holds for $\tilde m_k^r$.
\QED\end{proof}

The following lemma shows a well known result that the smallest eigenvalue of a given matrix can be computed efficiently with high probability.
\begin{lem}[\cite{kuczynski1992estimating} and Lemma 9 in \cite{royer2018complexity}]
\label{lem:lanczos}
Let $H$ be a symmetric matrix satisfying $\|H\|\le U_{H}$ for some $U_{H}>0$, and $\lambda_{\min}(H)$ its minimum eigenvalue.
Suppose that the Lanczos procedure is applied to find the largest eigenvalue of $ U_{H}I-H$
starting at a random vector distributed uniformly over the unit sphere. Then, for any
$\epsilon>0$ and $\delta\in(0, 1)$, there is a probability at least $1-\delta$ that the procedure outputs a
unit vector $v$ such that
$v^{\top}Hv\le\lambda_{\min}(H) +\epsilon$
in at most
$\min\left\{n,\frac{\log(n/\delta^2)}{2\sqrt2}\sqrt{\frac{ U_{H}}{\epsilon}}\right\}$
iterations.
\end{lem}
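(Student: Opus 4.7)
The plan is to combine the classical Kaniel--Paige--Saad deterministic Lanczos convergence analysis (via shifted--scaled Chebyshev polynomials) with a probabilistic lower bound on the overlap between the random starting vector and the target eigenvector. Set $M:=U_H I-H$, so that $M\succeq 0$, $\|M\|\le 2U_H$, and $\lambda_{\max}(M)=U_H-\lambda_{\min}(H)$. Producing a unit vector $v$ with $v^\top Hv\le\lambda_{\min}(H)+\epsilon$ is then equivalent to producing $v$ with $v^\top Mv\ge\lambda_{\max}(M)-\epsilon$, which is the standard largest-eigenvalue Lanczos task.

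First I would use the Krylov characterization: after $k$ Lanczos steps the top Ritz value satisfies
\[\hat\theta_k = \max_{p\in\mathcal{P}_{k-1}}\frac{q_1^\top p(M)Mp(M)q_1}{q_1^\top p(M)^2 q_1},\]
where $q_1$ is the starting unit vector and $\mathcal{P}_{k-1}$ is the space of polynomials of degree at most $k-1$. Expanding $q_1=\sum_{i=1}^n c_i u_i$ in an orthonormal eigenbasis of $M$ with $u_1$ the top eigenvector and $\beta:=\lambda_{\max}(M)$, I would substitute the Chebyshev polynomial rescaled to map $[0,\beta-\epsilon]$ into $[-1,1]$ and maximized at $\beta$. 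This produces the deterministic bound
\[\beta-\hat\theta_k \le \beta\cdot\frac{1-c_1^2}{c_1^2}\cdot T_{k-1}\!\Bigl(1+\tfrac{2\epsilon}{\beta-\epsilon}\Bigr)^{-2},\]
and the classical estimate $T_{k-1}(1+x)\ge\tfrac12 e^{2(k-1)\sqrt{x}}$ for small $x>0$ turns the right-hand side into an expression decaying like $\exp(-\Theta(k\sqrt{\epsilon/U_H}))/c_1^2$.

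Next I would handle the randomness. For $q_1$ uniformly distributed on the unit sphere in $\R^n$, the squared overlap $c_1^2=(q_1^\top u_1)^2$ has a $\mathrm{Beta}(1/2,(n-1)/2)$ distribution, and a direct tail calculation shows $\mathbb{P}\{c_1^2\le \delta^2/n\}\le\delta$ up to absolute constants; hence with probability at least $1-\delta$ we have $1/c_1^2\le n/\delta^2$. Substituting this into the deterministic bound and solving $\beta\cdot(n/\delta^2)\exp\!\bigl(-2(k-1)\sqrt{\epsilon/(2U_H)}\bigr)\le\epsilon$ for $k$ yields $k=\mO\bigl(\sqrt{U_H/\epsilon}\,\log(n/\delta^2)\bigr)$, matching the stated complexity, with the $1/(2\sqrt2)$ constant emerging from the Chebyshev bound. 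The outer $\min\{n,\cdot\}$ is simply the exact-arithmetic fact that Lanczos reaches an invariant subspace, and hence an exact eigenvalue, after at most $n$ iterations.

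The main obstacle is tracking the \emph{sharp} constants rather than just the asymptotic rate: obtaining exactly $\log(n/\delta^2)/(2\sqrt2)$ rather than a looser $\mO(\log(n/\delta)\sqrt{U_H/\epsilon})$ bound requires both a careful $\cosh^{-1}$-based Chebyshev estimate and a tight Beta-tail analysis, which is presumably why the paper cites Kucz\'{y}nski--Wo\'{z}niakowski and the restatement in Royer--Wright rather than reproving the bound from scratch.
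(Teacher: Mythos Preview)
The paper does not prove this lemma at all: it is quoted verbatim as a known result, with attribution to Kuczy\'nski--Wo\'zniakowski and to Lemma~9 of Royer--Wright, and no argument is supplied. Your sketch is a faithful outline of the proof that actually appears in those references---the Kaniel--Paige--Saad Chebyshev bound on the top Ritz value combined with a Beta-tail estimate for the squared overlap $c_1^2$ of a uniformly random unit vector with the leading eigenvector---so in that sense you are not deviating from ``the paper's approach'' but rather reconstructing what the paper imports by citation. Your own final paragraph already anticipates this: the authors cite rather than reprove precisely because extracting the sharp constant $1/(2\sqrt{2})$ from the Chebyshev growth and the Beta tail is bookkeeping they had no reason to repeat.
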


Now we are ready to present the main result in this section that Algorithm \ref{alg:cr} has an operation complexity $\mathcal{\tilde O}$ $(\epsilon_g^{-7/4})$.
\begin{thm}
\label{thm:cropcomp}
Suppose the approximate eigenpair in line 3 of Algorithm \ref{alg:cr} is computed by the Lanczos Procedure, and  subproblems \eqref{eq:mkr} and \eqref{eq:mkrt} are approximately solved by Algorithm \ref{alg:NAG}. Under Conditions \ref{con:mrgrad} and \ref{con:eps} and Assumptions \ref{asmp:1}, \ref{asmp:ybd} and \ref{asmp:hbd},  the algorithm finds an $(\epsilon_g,\sqrt{L\epsilon_g})$ stationary point with high probability, and in this case the operation complexity of Algorithm \ref{alg:cr} is $\mathcal{\tilde O}$ $(\epsilon_g^{-7/4})$.
\end{thm}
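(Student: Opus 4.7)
The plan is to combine the outer iteration complexity from Theorem~\ref{thm:crcplx} with per-iteration cost bounds coming from Lemma~\ref{lem:lanczos} (Lanczos) and Lemma~\ref{lem:compNAG} (NAG), via a union bound over all outer iterations.

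First, I would fix $K=\mathcal{O}(\epsilon_g^{-3/2})$ as the upper bound on the number of outer iterations of Algorithm~\ref{alg:cr} given by Theorem~\ref{thm:crcplx}, and pick a target overall failure probability $\delta\in(0,1)$. To each call of the Lanczos procedure I would allocate failure probability $\delta/K$. By Lemma~\ref{lem:lanczos}, applied with this per-call failure budget and with accuracy $\epsilon_E=\tfrac{1}{3}\sqrt{L\epsilon_g}$ (using $U_H=M_H$ from Assumption~\ref{asmp:hbd} to bound $\|H_k\|$ uniformly), each call succeeds with probability $1-\delta/K$ in
\[
\min\!\left\{n,\ \tfrac{\log(nK^2/\delta^2)}{2\sqrt{2}}\sqrt{M_H/\epsilon_E}\right\}=\tilde{\mathcal{O}}(\epsilon_E^{-1/2})=\tilde{\mathcal{O}}(\epsilon_g^{-1/4})
\]
matrix-vector products. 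A union bound then guarantees that with probability at least $1-\delta$ every Lanczos call produces an $(\alpha_k,v_k)$ satisfying the required eigenpair accuracy, so that the hypotheses of Theorem~\ref{thm:crcplx} actually hold along the entire run.

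Next I would account for the subproblem solves. In each outer iteration at most one of the two subproblems \eqref{eq:mkr} and \eqref{eq:mkrt} is solved. Both model functions $m_k^r$ and $\tilde m_k^r$ are $\epsilon_E$-strongly convex by construction and $L_S$-smooth by Lemma~\ref{lem:subplip}, so Lemma~\ref{lem:compNAG} shows that NAG (Algorithm~\ref{alg:NAG}) reaches the subproblem criterion $\|\nabla h(s_k)\|\le\epsilon_S$ from Condition~\ref{con:mrgrad} in $\tilde{\mathcal{O}}(\epsilon_E^{-1/2})=\tilde{\mathcal{O}}(\epsilon_g^{-1/4})$ inner iterations. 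Moreover, Lemma~\ref{lem:compNAG} shows that each such inner iteration costs at most two Hessian-vector products with $H_k$ plus $\mathcal{O}(n)$ vector arithmetic, and in the setting of an ``operation'' from~\cite{curtis2021trust} each Hessian-vector product counts as one unit. Hence the subproblem phase contributes $\tilde{\mathcal{O}}(\epsilon_g^{-1/4})$ operations per outer iteration. The remaining per-iteration work (one gradient evaluation, one Hessian evaluation, a few function evaluations in the line~5 test and, when $L\|s_k\|+2\alpha_k<0$, the construction of the negative curvature step $d_k=\tfrac{1}{L}w_k$ from $v_k$) is $\mathcal{O}(1)$ operations.

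Putting these two bounds together, the per-iteration cost is $\tilde{\mathcal{O}}(\epsilon_g^{-1/4})$ operations. Multiplying by the $K=\mathcal{O}(\epsilon_g^{-3/2})$ outer iterations yields a total operation complexity of
\[
\mathcal{O}(\epsilon_g^{-3/2})\cdot\tilde{\mathcal{O}}(\epsilon_g^{-1/4})=\tilde{\mathcal{O}}(\epsilon_g^{-7/4}),
\]
which holds on the event (probability $\ge 1-\delta$) that all Lanczos calls succeed; on this event Proposition~\ref{prop:term} together with Theorem~\ref{thm:crcplx} guarantees termination at an $(\epsilon_g,\sqrt{L\epsilon_g})$ stationary point. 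The only subtle point will be bookkeeping: making sure that the $\tilde{\mathcal{O}}$ hides a $\log(K/\delta)=\log(\epsilon_g^{-1}/\delta)$ factor from the union bound, which does not affect the polynomial order. This logarithmic inflation is harmless and is exactly why the final bound is stated with $\tilde{\mathcal{O}}(\cdot)$ rather than $\mathcal{O}(\cdot)$.
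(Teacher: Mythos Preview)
Your proposal is correct and follows essentially the same approach as the paper: invoke Theorem~\ref{thm:crcplx} for the $\mathcal{O}(\epsilon_g^{-3/2})$ outer-iteration bound, use Lemma~\ref{lem:lanczos} with a union bound (per-call budget $\delta/K$) to get $\tilde{\mathcal{O}}(\epsilon_g^{-1/4})$ Lanczos cost per iteration, use Lemma~\ref{lem:compNAG} for the $\tilde{\mathcal{O}}(\epsilon_g^{-1/4})$ NAG cost, and multiply. Your explicit bookkeeping of the $\log(K/\delta)$ factor and of the residual $\mathcal{O}(1)$ per-iteration work is slightly more detailed than the paper's write-up, but the argument is the same.
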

\begin{proof}
First note that the iteration complexity is $\mO(\epsilon_g^{-3/2})$ due to Theorem \ref{thm:crcplx}.

At each iteration, if the subproblems are approximately solved in line 8 or 11 in Algorithm \ref{alg:cr}, the subproblem iteration complexity is
$\tO(\epsilon_E^{-1/2})=\tO(\epsilon_g^{-1/4})$ because that  $\epsilon_E=\sqrt{L\epsilon_g}/3$, and that the dominated cost is $\tO(\epsilon_g^{-1/4})$  matrix vector products, thanks to Lemma \ref{lem:compNAG}.

Another cost at each iteration is inexactly computing the smallest eigenvalue.
Note that the failure probability of the  Lanczos procedure is only in the ``log factor" in the complexity bound. Hence, for any given $\delta'\in(0,1)$, in the Lanczos procedure we can use a very small $\delta$ like $\delta=\delta'/T$, where $T$ is the total iteration number bounded by $\mO(\epsilon_g^{-3/2})$. Then from the union bound, the full Algorithm \ref{alg:cr} finds an $(\epsilon_g,\sqrt{L\epsilon_g})$ stationary point with probability $1-\delta'$.
From Lemma \ref{lem:lanczos}, it takes $\tO(\epsilon_E^{-1/2})=\tO(\epsilon_g^{-1/4})$ matrix vector products to achieve an $\epsilon_E$ approximate eigenpair,  with   probability at least $1-\delta'$.

As the iteration complexity of Algorithm \ref{alg:cr} is $\mO(\epsilon_g^{-3/2})$ and each iteration takes $\tO(\epsilon_g^{-1/4})$ unit operations, we conclude that the operation complexity is  $ \mathcal{\tilde O}(\epsilon_g^{-7/4})$.
%
\QED\end{proof}

\section{Convergence analysis for the ARC algorithm}
\label{sec:4}
In this section, we first  show that the ARC algorithm also has an iteration complexity  $\mO\left({\epsilon_g}^{-3/2}\right)$ for finding an $(\epsilon_g,\sqrt{L\epsilon_g})$ stationary point. Then we will briefly analyze its operation complexity in the case that the subproblem is solved by NAG and the approximate smallest eigenvalue of the Hessian is computed by the Lanczos procedure. The notation in this section follows that in Section \ref{sec:2}.

To show the iteration complexity of the ARC algorithm is still $\mO(\epsilon_g^{-3/2})$, the key proof here is that we need to counter the iteration number for  successful steps. Specifically, we need the following lemma that shows when $\sigma_k$ is large enough, the iteration must be successful.
\begin{lem}\label{lem:suc}
Suppose Assumption \ref{asmp:1} holds. If $\sigma_k\ge L/2$ and $m_{k}(d_k)<0$, then the $k$th iteration is successful.
\end{lem}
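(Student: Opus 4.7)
The plan is a short and direct calculation that exploits the cubic upper bound \eqref{eq:cubicub} together with the hypothesis $\sigma_k \ge L/2$. Recall that the $k$th iteration of Algorithm \ref{alg:ourarc} is declared successful when $\rho_k \ge \eta$, where $\rho_k = (f(x_k) - f(x_k+d_k))/(-m_k(d_k))$. Since we are given $m_k(d_k) < 0$, the denominator is strictly positive, so it suffices to show that the numerator dominates the denominator, i.e., $f(x_k) - f(x_k+d_k) \ge -m_k(d_k)$.

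First, I would apply the cubic upper bound \eqref{eq:cubicub} from Assumption \ref{asmp:1} at the trial step $s = d_k$, obtaining
\[
f(x_k + d_k) - f(x_k) \le g_k^{\top} d_k + \tfrac{1}{2} d_k^{\top} H_k d_k + \tfrac{L}{6}\|d_k\|^3.
\]
Next, adding and subtracting $\tfrac{\sigma_k}{3}\|d_k\|^3$ on the right and recognizing the model $m_k(d_k) = g_k^\top d_k + \tfrac12 d_k^\top H_k d_k + \tfrac{\sigma_k}{3}\|d_k\|^3$, this rewrites as
\[
f(x_k + d_k) - f(x_k) \le m_k(d_k) + \left(\tfrac{L}{6} - \tfrac{\sigma_k}{3}\right)\|d_k\|^3.
\]
Using $\sigma_k \ge L/2$ forces $\tfrac{L}{6} - \tfrac{\sigma_k}{3} \le 0$, so the extra term is nonpositive and we conclude $f(x_k + d_k) - f(x_k) \le m_k(d_k)$, or equivalently $f(x_k) - f(x_k + d_k) \ge -m_k(d_k) > 0$.

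Dividing both sides by $-m_k(d_k) > 0$ yields $\rho_k \ge 1 > \eta$ (since $\eta \in (0,1)$ by the requirements on the input of Algorithm \ref{alg:ourarc}), hence the iteration is successful by the criterion in the algorithm. There is essentially no technical obstacle here: the argument is the classical ``sufficiently large regularizer implies acceptance'' mechanism for cubic regularization. The only subtlety worth pointing out is that this works uniformly regardless of whether $d_k$ was produced from the regularized subproblem \eqref{eq:amkr}, \eqref{eq:amkrt}, or set to the scaled negative-curvature direction $w_k/(2\sigma_k)$; in all three cases $d_k$ is a fixed vector in $\R^n$ and the cubic upper bound applies verbatim along the segment $[x_k, x_k + d_k]$ as stipulated in \eqref{eq:hlip}.
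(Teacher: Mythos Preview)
Your proposal is correct and follows essentially the same approach as the paper: apply the cubic upper bound \eqref{eq:cubicub}, use $\sigma_k\ge L/2$ to absorb the $\tfrac{L}{6}\|d_k\|^3$ term into the model, and conclude $\rho_k\ge 1>\eta$. The paper's proof is slightly terser (it chains the inequalities directly rather than adding and subtracting $\tfrac{\sigma_k}{3}\|d_k\|^3$), but the argument is identical in substance.
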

\begin{proof}
By \eqref{eq:cubicub} and $\sigma_k\ge L/2$, we have
\begin{align*}
f(x_{k}+d_{k}) - f(x_k) & \leq g_k^{\top}d_k + \frac{1}{2}d_k^{\top}H_kd_k + \frac{L}{6}\|d_k\|^3 \notag\\
 & \leq g_k^{\top}d_k + \frac{1}{2}d_k^{\top}H_kd_k + \frac{\sigma_k}{3}\|d_k\|^3 \label{eq:negsuc}\\
& =m_k(d_k)<0.\notag
\end{align*}
This yields
$\rho_k=\frac{f(x_k)-f(x_{k}+d_{k})}{-m_k(d_k)}\ge1>\eta.$ Thus the $k$th iteration is successful.
\QED\end{proof}

%

The following lemma shows that the adaptive regularizer is bounded above.
\begin{lem}\label{lem:gamma}
Suppose Assumption \ref{asmp:1} holds. Then $\sigma_k\le \max\{\sigma_0,\gamma L/2\},$ $\forall k\ge0$.
\end{lem}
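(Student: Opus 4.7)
The plan is to prove this by a straightforward induction on $k$, using Lemma \ref{lem:suc} to show that $\sigma_k$ cannot grow beyond the threshold $L/2$ by more than a factor of $\gamma$.

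The base case $k=0$ is immediate, since $\sigma_0 \leq \max\{\sigma_0,\gamma L/2\}$. For the inductive step, I assume $\sigma_k \leq \max\{\sigma_0,\gamma L/2\}$ and split on how $\sigma_{k+1}$ is updated in the algorithm. If the $k$th iteration is successful (either $\rho_k \ge \eta$ or $\alpha_k < -\epsilon_E$), then $\sigma_{k+1} = \sigma_k/\gamma \le \sigma_k$, and the bound is inherited. The interesting case is the unsuccessful iteration, where $\sigma_{k+1} = \gamma\sigma_k$; here I must rule out $\sigma_k > L/2$.

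To that end, note that unsuccessful means $\rho_k < \eta$ and $\alpha_k \ge -\epsilon_E$. Under the latter condition Lemma \ref{lem:bd1} (applied to Algorithm \ref{alg:ourarc}) yields $-m_k(d_k) > 0$, i.e., $m_k(d_k) < 0$. (If $x_k + d_k$ happened to already be an $(\epsilon_g,\sqrt{L\epsilon_g})$ stationary point, one can argue directly from the construction of $d_k$ and Condition \ref{con:mrgrad} that $m_k(d_k) < 0$, so the hypothesis of Lemma \ref{lem:suc} is met either way.) Then Lemma \ref{lem:suc} asserts that whenever $\sigma_k \ge L/2$ and $m_k(d_k) < 0$, the iteration must be successful — contradicting unsuccessfulness. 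Hence $\sigma_k < L/2$, and consequently $\sigma_{k+1} = \gamma\sigma_k < \gamma L/2 \le \max\{\sigma_0,\gamma L/2\}$, completing the induction.

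The main obstacle, modest as it is, is verifying that $m_k(d_k) < 0$ in the unsuccessful case so that Lemma \ref{lem:suc} is applicable; this is handled by Lemma \ref{lem:bd1} in the generic case and by a direct check from the subproblem optimality (Condition \ref{con:mrgrad}) in the degenerate case. Everything else is a mechanical case split driven by the adaptive rule $\sigma_{k+1} \in \{\sigma_k/\gamma,\gamma\sigma_k\}$.
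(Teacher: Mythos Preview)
Your overall strategy matches the paper's: both use Lemma \ref{lem:suc} to argue that once $\sigma_k \ge L/2$ the iteration is necessarily successful, so $\sigma_k$ can never be pushed above $\gamma L/2$. The paper phrases this as a minimal-counterexample contradiction rather than a formal induction, but the substance is identical.

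There is, however, a genuine problem with how you justify the hypothesis $m_k(d_k) < 0$ needed to invoke Lemma \ref{lem:suc}. You appeal to Lemma \ref{lem:bd1}, but the stated bound of Lemma \ref{lem:bd1} for Algorithm \ref{alg:ourarc} is proved in the appendix \emph{using} Lemma \ref{lem:gamma} (it explicitly cites Lemma \ref{lem:gamma} to bound $\sigma_k \le \max\{\sigma_0,\gamma L/2\}$ when deriving the lower bound on $\|d_k\|$). Invoking it here is therefore circular. In addition, Lemma \ref{lem:bd1} assumes Conditions \ref{con:mrgrad} and \ref{con:eps}, whereas Lemma \ref{lem:gamma} is stated under Assumption \ref{asmp:1} alone, so you are importing hypotheses the lemma does not have.

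The paper's own proof, for comparison, simply applies Lemma \ref{lem:suc} without separately checking $m_k(d_k) < 0$; it treats this as implicit in the setup. If you want to close that gap rigorously, you should argue $m_k(d_k) < 0$ directly from the approximate optimality condition \eqref{eq:gmkr} together with the non-termination condition (which forces $\|g_k\| > \epsilon_g$ when $\alpha_k \ge -\epsilon_E$), without routing through any result that already presupposes a bound on $\sigma_k$.
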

\begin{proof}
Suppose the $k$th iteration is the first unsuccessful iteration such that  $\sigma_{k+1}=\gamma\sigma_k\ge\gamma L/2$, which implies $\sigma_k\ge L/2$. However, from Lemma \ref{lem:suc}, we know that the $k$th iteration must be successful and thus $\sigma_{k+1}=\sigma_k/\gamma<\sigma_k$, which is a contradiction.
\QED\end{proof}

Now we are ready to present our main convergence result of Algorithm \ref{alg:ourarc}, which is of the same order with the best known iteration bound (\cite{cartis2011adaptive2,xu2020newton}).
\begin{thm}
\label{thm:arc}
Suppose that Assumption \ref{asmp:1} and Conditions \ref{con:mrgrad}
 and \ref{con:eps} hold, and $\max\{\frac{\sigma_0}{L},\frac{\gamma }{2}\}\le 1$.
Then  Algorithm  \ref{alg:ourarc} takes
$T\le \mO(\epsilon_g^{-3/2})$ iterations to find an $(\epsilon_g,\sqrt{L\epsilon_g})$ stationary point.
\end{thm}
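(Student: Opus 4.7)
The plan is to split the iterations into successful and unsuccessful sets and bound each in turn. First, I would observe that the hypothesis $\max\{\sigma_0/L,\gamma/2\}\le 1$ together with Lemma~\ref{lem:gamma} yields the uniform cap $\sigma_k\le L$ for every $k$. This cap is used throughout: it turns the parameter-dependent constants in Lemmas~\ref{lem:bd1} and \ref{lem:bd2} into purely numerical ones, and it controls how fast $\sigma_k$ can grow via unsuccessful steps.

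Next I would handle the unsuccessful iterations via the usual ARC ratio argument. Writing $\mathcal{S}_T$ and $\mathcal{U}_T$ for the successful and unsuccessful index sets up to iteration $T$, and tracking the multiplicative dynamics $\sigma_{k+1}=\sigma_k/\gamma$ on $\mathcal{S}_T$ and $\sigma_{k+1}=\gamma\sigma_k$ on $\mathcal{U}_T$, I get $\sigma_T=\sigma_0\gamma^{|\mathcal{U}_T|-|\mathcal{S}_T|}$. Combined with the cap above, this gives $|\mathcal{U}_T|\le |\mathcal{S}_T|+\mathcal{O}(1)$, so it suffices to control $|\mathcal{S}_T|$.

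Then, for each $k\in\mathcal{S}_T$, I would aim at a per-iteration function decrease of order $\epsilon_E^3$. Under the cap $\sigma_k\le L$, Lemmas~\ref{lem:bd1} and \ref{lem:bd2} both deliver $-m_k(d_k)\ge c\,\epsilon_E^3/L^2$ for an absolute constant $c>0$. When the step is accepted because $\rho_k\ge\eta$, the definition of $\rho_k$ gives $f(x_k)-f(x_{k+1})\ge\eta(-m_k(d_k))$ and hence the desired $\epsilon_E^3$-decrement. When the step is accepted solely via the negative-curvature trigger $\alpha_k<-\epsilon_E$, one combines the cubic upper bound \eqref{eq:cubicub} with the structure of $d_k$ (either an approximate minimizer of $\tilde m_k^r$ whose gradient is controlled by Condition~\ref{con:mrgrad}, or the explicit $d_k=w_k/(2\sigma_k)$ in the hard case) together with $\sigma_k\le L$ to obtain the same order of $f$-decrease. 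Summing over $\mathcal{S}_T$ and invoking $f(x)\ge f^*$ from Assumption~\ref{asmp:1} yields $|\mathcal{S}_T|=\mathcal{O}(\epsilon_E^{-3})=\mathcal{O}(\epsilon_g^{-3/2})$, and combining with the unsuccessful bound gives $T=\mathcal{O}(\epsilon_g^{-3/2})$; Proposition~\ref{prop:term} then certifies that the returned iterate is an $(\epsilon_g,\sqrt{L\epsilon_g})$ stationary point.

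The most delicate step is the second sub-case of the $f$-decrease analysis. Lemma~\ref{lem:suc} guarantees $\rho_k\ge 1>\eta$ as soon as $\sigma_k\ge L/2$, so in that regime every iteration with $m_k(d_k)<0$ is automatically successful with a clean $f$-decrease. The nontrivial window is $\sigma_k<L/2$ combined with $\alpha_k<-\epsilon_E$ and $\rho_k<\eta$, where the algorithm commits to $d_k$ despite a poor model ratio. The main obstacle is to show that the cubic error term $(L/6-\sigma_k/3)\|d_k\|^3$ in \eqref{eq:cubicub} is still dominated by the negative-curvature contribution, so that $f$ decreases by a uniform multiple of $\epsilon_E^3/L^2$ regardless of how small $\sigma_k$ becomes; this is what allows the telescoping of $f$-decrease to survive even in that regime and close the counting argument.
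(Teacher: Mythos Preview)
Your overall plan coincides with the paper's argument: decompose the iterations into successful and unsuccessful sets, bound $|\mathcal U|$ against $|\mathcal S|$ via $\sigma_T=\sigma_0\gamma^{|\mathcal U|-|\mathcal S|}$ together with the cap from Lemma~\ref{lem:gamma}, and then bound $|\mathcal S|$ by telescoping the function decrease with the model-decrease bounds of Lemmas~\ref{lem:bd1}--\ref{lem:bd2}. In fact the paper's proof simply writes $f(x_k)-f(x_{k+1})\ge -\eta\,m_k(d_k)$ for every $k\in\mathcal S_2$ without singling out the forced-acceptance case you flag, so at that point your plan is, if anything, more scrupulous than the paper.

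That said, the mechanism you propose for closing the forced-acceptance sub-case does not go through. In the hard-case branch $d_k=w_k/(2\sigma_k)$ one has $\|d_k\|=|\alpha_k|/(2\sigma_k)$, $d_k^\top H_k d_k=\alpha_k^3/(4\sigma_k^2)$, and $g_k^\top d_k\le 0$, so the cubic upper bound \eqref{eq:cubicub} gives
\[
f(x_k+d_k)-f(x_k)\ \le\ \frac{\alpha_k^3}{8\sigma_k^2}+\frac{L}{48\sigma_k^3}\,|\alpha_k|^3
\ =\ \frac{|\alpha_k|^3}{48\sigma_k^3}\bigl(L-6\sigma_k\bigr),
\]
which is strictly positive whenever $\sigma_k<L/6$. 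Thus the cubic error term is \emph{not} dominated by the negative-curvature contribution in that regime, contrary to your claim; the same obstruction arises in the $d_k=s_k$ branch because $\|s_k\|\ge |\alpha_k|/\sigma_k$ there. Since Algorithm~\ref{alg:ourarc} imposes no lower bound on $\sigma_k$ and every accepted step divides it by $\gamma$, the upper cap $\sigma_k\le L$ alone cannot deliver a uniform $f$-decrease for these iterations, and the telescoping count of $|\mathcal S|$ cannot be closed on the basis you describe.
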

\begin{proof}

Note that
\begin{equation}
\label{eq:tsu}
T=|\mS|+|\mU|,
\end{equation} where $\mS$ is the index set of successful iterations and $\mU$ is the index set of unsuccessful iterations.
Here, $|\mathcal A|$ denotes the cardinality of a set $\mathcal A$.
Since  $\sigma_T=\sigma_0\gamma^{|\mU|-|\mS|}$ and $\sigma_T\le \max\{\sigma_0,\gamma L/2\}$ due to Lemma \ref{lem:gamma}, we have
\begin{equation}
\label{eq:us}
|\mU|\le \max\left\{0,\log_\gamma\left(\frac{\gamma L}{2\sigma_0}\right)\right\}+|\mS|.
\end{equation}
Note also that $\mS=\mS_1\cup\mS_2$, where
\begin{eqnarray*}
\mS_1&:=&\{k\in\mS:\|\nabla f(x_k+d_k)\|\le \epsilon_g \text{ and }\lambda_{\min}(H_{k+1})\ge -\sqrt{L\epsilon_g}\},\\
\mS_2&:=& \mS\setminus\mS_1.
\end{eqnarray*}
Now we have
\begin{eqnarray*}
f(x_0)- f^*&\ge& \sum_{k=0}^\infty f(x_k)-f(x_{k+1})=\sum_{k\in \mS} f(x_k)-f(x_{k+1})\\
&\ge&\sum_{k\in \mS_{2}} f(x_k)-f(x_{k+1})\\
&\ge&\sum_{k\in \mS_{2}} -\eta m_k(d_k)\\
&\ge&\sum_{k\in \mS_{2}}\eta \min\left\{\frac{3}{\max\{\gamma,2\sigma_0/L\}+1},1,\frac{1}{3(\max\{2\sigma_0/L,\gamma\})^2}\right\}\cdot\frac{\epsilon_E^3}{L^2}
\end{eqnarray*}
where the fifth inequality follows from Lemmas \ref{lem:bd1} and \ref{lem:bd2}.
This, together with $\epsilon_E=\sqrt{L\epsilon_g}/3$,  gives
\[|\mS_2|\le   \mO(\epsilon_g^{-3/2}). \]
It is obvious that $|\mS_1|=1$ as the algorithm terminates in one iteration.
Then we have
$$|\mS|=|\mS_1|+|\mS_2|\le \mO\left(\epsilon_g^{-3/2}\right).$$
This, together with \eqref{eq:tsu} and \eqref{eq:us}, gives $T\le \mO(\epsilon_g^{-3/2}) $.
\QED\end{proof}

In fact, with a similar analysis to Section \ref{sec:3}, we can show that the operation complexity for Algorithm \ref{alg:ourarc} is still $\tO(\epsilon_g^{-7/4})$ to find an $(\epsilon_g,\sqrt{L\epsilon_g})$ stationary point under mild conditions with high probability, if NAG and the Lanczos procedure are used in each iteration.
This is because the matrix vector product number in each iteration of Algorithm \ref{alg:ourarc} is still $\tilde\mO(\epsilon_g^{-1/4})$. Two key observations for proving the $\tilde\mO(\epsilon_g^{-1/4})$ bound of NAG are that $\sigma_{k}$ is upper bounded by constants as shown in Theorem \ref{thm:arc}, and that the subproblems are still $\epsilon_E$-strongly convex and Lipschitz smooth.
The Lipschitz smoothness follows from a similar technique with Lemma \ref{lem:subplip} under Assumptions \ref{asmp:ybd} and \ref{asmp:hbd}.


\section{Numerical experiments}
\label{sec:5}
This section mainly shows the effects of our new subproblem reformulation without the additional regularizer $\epsilon_E\|s\|^2$ for the ARC algorithm. We did numerical experiments among ARC algorithms (\cite{cartis2011adaptive})  with different subproblem solvers and compared their performance.
We point out that we do not directly implement Algorithm \ref{alg:ourarc} since it is practically inefficient if we compute the minimum eigenvalue of the Hessian at every iteration.
Particularly, in Algorithm~\ref{alg:arc}, we only call a  subproblem solver based on reformulation \eqref{eq:reform-u} if a prescribed condition is met.

Let  $f$ denote the objective function, $g_k$ denote the gradient $\nabla f(x_k)$ and $H_k$ denote the Hessian $\nabla^2f(x_k)$.
In Algorithm~\ref{alg:arc}, we use the Cauchy point $s_k^C$ (as in \cite{cartis2011adaptive}) as the initial point of the subproblem solver in each iteration:
\[s_k^C=-\alpha_k^Cg_k\text{ and }\alpha_k^C=\argmin_{\alpha\in\R_+} m_k(-\alpha g_k),\]
which is obtained by globally minimizing $m_k(s)=g_k^s+s^{\top}H_ks+\frac{\sigma_k}{3}\|s\|^3$ along the current negative gradient direction.
Let $\mathcal A$ denote an arbitrary solver for \eqref{opt:CRS},
$\mathcal A_r$ denote an arbitrary solver for the constrained reformulation \eqref{eq:reform}
and $\mathcal A_u$ denote an arbitrary solver for the unconstrained reformulation \eqref{eq:reform-u}. Because  the subproblem solver $\mathcal A_u$ (or $\mathcal A_r$) are designed for cases where $H_k$ is not positive semidefinite,
and the Cauchy point is a good initial point when the norm of the gradient is large,
we call the solver $\mathcal A_u$ (or $\mathcal A_r$) if the following condition is met:
\begin{equation}
  \label{eq:solvercon}
  \|g_k\|\le\max\left(f(x_{k}),1\right)\cdot \epsilon_1\quad\text{ and }\quad\lambda_{\min}(H_k)<-\epsilon_2,
\end{equation}
where $\epsilon_1$ and $\epsilon_2$ are some small positive real numbers and $\lambda_{\min}(H_k)$ is the minimum eigenvalue of $H_k$.
If condition \eqref{eq:solvercon} is not met, we call $\mathcal A$ to solve the model function directly.
We only accept the (approximate) solution $s_k$    if  $m_{k}(s_k)$ is smaller than that  $m_{k}(s_k^C)$; otherwise the Cauchy point $s_k^C$ is used.
This guarantees that Algorithm \ref{alg:arc} converges to a first-order stationary point under mild conditions  (\cite[Lemma 2.1]{cartis2011adaptive}).

\begin{algorithm}[!ht]
\begin{algorithmic}[1]
\caption{ARC using convex reformulation }
\label{alg:arc}
\Require $x_0,~\gamma_2\ge \gamma_1>1,~1>\eta_2\ge\eta_1>0,$ and $\sigma_0>0$, for $k=0,1,...$ until
convergence
\State compute the Cauchy point $s_k^C$
\If {condition \eqref{eq:solvercon} is satisfied}
\State compute a trial step  $\bar s_k$ using $\mathcal A_u$ (or $\mathcal A_r$) with an initial point $s_k^C$
\Else
\State compute a trial step $\bar s_k$ using $\mathcal A$ with an initial point $s_k^C$
\EndIf
\State set
    \begin{equation*}
      s_{k}=
      \left\{
        \begin{array}{ll}
        \bar s_k&\text{if}~m_k(\bar s_k)\le m_k(s_k^C)  \\
        s_k^C& \text{otherwise}
        \end{array}
      \right.
    \end{equation*}
\State compute $f(x_k+s_k)$ and
    \[\rho_k=\frac{f(x_k)-f(x_k+s_k)}{-m_k(s_k)}\]
\State set
    \begin{equation*}
      x_{k+1}=
      \left\{
        \begin{array}{ll}
        x_k+s_k &\text{if}~\rho_k\ge\eta_1  \\
        x_k & \text{otherwise}
        \end{array}
      \right.
    \end{equation*}
\State  set
    \begin{equation*}
      \sigma_{k+1}\in
      \left\{
        \begin{array}{lll}
        \left(0,\sigma_k\right] &\text{if}~\rho_k>\eta_2 &(\text{very successful iteration})  \\
        \left[\sigma_k,\gamma_1\sigma_k\right] &\text{if}~\eta_1\le\rho_k\le\eta_2 &(\text{successful iteration})\\
        \left[\gamma_1\sigma_k, \gamma_2\sigma_k\right] & \text{otherwise} &(\text{unsuccessful iteration})
        \end{array}
      \right.
    \end{equation*}
\end{algorithmic}
\end{algorithm}

We experimented with two subproblem solvers $\mathcal A_u$ for Algorithm~\ref{alg:arc}.
The first one is the gradient method with Barzilai-Borwein step size (\cite{barzilai1988two}) and the second one is NAG (here we denote it by APG to keep consistent with \cite{jiang2021accelerated}). More specifically, in our implementation,
if condition \eqref{eq:solvercon} is not satisfied, we still solve \eqref{opt:CRS} by BBM;
otherwise we implement BBM or APG to solve the unconstrained problem \eqref{eq:reform-u}.
The former  is termed \texttt{ARC-URBB}, while the latter is termed \texttt{ARC-URAPG}.
We compare our algorithms to the ARC algorithm in \cite{cartis2011adaptive}, denoted by \texttt{ARC-GLRT}, in which the subproblems are solved by the generalized Lanczos method.
Besides, we also implement Algorithm \ref{alg:arc} with two different subproblem solvers $\mathcal A_r$ in \cite{jiang2021accelerated}, denoted by \texttt{ARC-RBB} and \texttt{ARC-RAPG}, in which the subproblems are reformulated as \eqref{eq:reform} and solved by BBM and APG, respectively.

We implemented all the ARC algorithms in MATLAB R2017a on a Macbook Pro laptop with 4 Intel i5 cores (1.4GHz) and 8GB of RAM.
The implementations are based on 20 medium-size ($n\in[500,1500]$) problems from the CUTEst collections (\cite{gould2015cutest}) as in \cite{jiang2021accelerated}, where condition \eqref{eq:solvercon} is satisfied in at least one iteration in our new algorithm.
For condition~\eqref{eq:solvercon}, we set $\epsilon_1=10^{-2}$ and $\epsilon_2=10^{-4}$.
Other parameters in ARC are chosen as described in \cite{cartis2011adaptive}.
All the subproblem solvers use the same   eigenvalue tolerance, stopping criteria, and initialization as in \cite{jiang2021accelerated}.
For BBMs, a simple line search rule is used to guarantee the decrease of the objective function values.
For APGs,  a well known restarting strategy (\cite{o2015adaptive,ito2017unified}) is used to speed up the algorithm.

The numerical results are reported in Table~\ref{tab:cutest}.
The first column indicates the name of the problem instance with its  dimension.
The column $f^*$,  $n_i$, $n_{\text{prod}}$ , $n_f$ , $n_g$ and $n_{\text{eig}}$ show the final objective value, the iteration number, number of Hessian-vector products,
number of function evaluations, number of gradient evaluations and the number of eigenvalue computations.
The columns time, time$_\text{eig}$ and time$_\text{loop}$,
show in seconds the overall CPU time, eigenvalue computation time and difference between the last two, respectively.
Each value is an average of 10 realizations with different initial points. Table~\ref{tab:cutest} shows that with the same stopping criteria,
all algorithms return the same objective function value on 18 of the problems,
except  \texttt{ARC-RAPG}, \texttt{ARC-URBB} and \texttt{ARC-URAPG} on the problem BROYDN7D with a lower final objective function value,
and  \texttt{ARC-GLRT} on the problem CHAINWOO with a lower final objective function value.
 Table~\ref{tab:cutest} also shows the quantities $n_i$, $n_{\text{prod}}$, $n_f$ and $n_g$ of the five algorithms are similar. For several problems,
   \texttt{ARC-URBB} and \texttt{ARC-URAPG} based on our new reformulation have some advantages on $n_{\text{prod}}$.
Due to the eigenvalue calculation, four algorithms based on the convex reformulation require additional manipulation,
resulting in a larger total CPU time, evidenced by the column time, which was also observed in \cite{jiang2021accelerated}.
The column time$_\text{loop}$ shows that all the algorithms have a similar CPU time if we exclude the time for computing the eigenvalues.

To investigate the numerical results more clearly,
we illustrate the experiments by    performance profiles Figures \ref{fig:1}--\ref{fig:3} (\cite{dolan2002benchmarking}). According to the performs profiles, although \texttt{ARC-GLRT} has the best performance, the iteration numbers and the gradient evaluation numbers of \texttt{ARC-URBB} and \texttt{ARC-URAPG} are less than 2 times of those by \texttt{ARC-GLRT} on over 95\% of the tests,
and   Hessian-vector product number of \texttt{ARC-URBB} is less than 2 times of those by \texttt{ARC-GLRT} on about 85\% of the tests.
Noting that \texttt{ARC-URBB}, \texttt{ARC-URAPG}, \texttt{ARC-RBB} and \texttt{ARC-RAPG} have the similar performance,  we thus plot the performance profiles on test problems for these 4 algorithms in Figures \ref{fig:4}--\ref{fig:6}. We find \texttt{ARC-URAPG} has the best iteration number and gradient evaluation number, and both  \texttt{ARC-URBB} and \texttt{ARC-URAPG} have better Hessian-vector product number.

We also investigate the numerical results for all 10 implementations with different initializations,
in order to show the advantages of the new algorithms more comprehensively.
Table~\ref{tab:cutestfull} reports the number that \texttt{ARC-URBB} or \texttt{ARC-URAPG} outperforms \texttt{ARC-GLRT}, \texttt{ARC-RBB} and \texttt{ARC-RAPG} out of the 10 realizations for each problem.
It shows our algorithms frequently outperform \texttt{ARC-GLRT}, \texttt{ARC-RBB} and \texttt{ARC-RAPG} in  iteration number, number of
Hessian-vector products and gradient evaluations.

{
\centering
\scriptsize
\begin{longtable}{c|c|ccccccccc}
\toprule
Problem & Method
  &  $n_i$  & $n_{\text{prod}}$ & $n_f$ & $n_g$
  & $n_{\text{eig}}$ & $f^*$  & time  & time$_\text{eig}$
  &  time$_\text{loop}$  \\

\midrule
BROYDN7D &\texttt{\texttt{ARC-GLRT}}
&42.7 &828.9 &43.7 &35.6 &- &2.42e+02 &0.340 &- &0.340 \\
(1000) &\texttt{\texttt{ARC-RBB}}
&43.6 &946.9 &44.6 &36.3 &17.4 &2.40e+02 &1.071 &0.681 &0.390 \\
&\texttt{\texttt{ARC-RAPG}}
&43.7 &933.5 &44.7 &36.2 &17.9 &2.39e+02 &1.100 &0.701 &0.399 \\
&\texttt{\texttt{ARC-URBB}}
&43.8 &933.2 &44.8 &36.4 &17.9 &2.39e+02 &1.059 &0.688 &0.371 \\
&\texttt{\texttt{ARC-URAPG}}
&43.4 &845.8 &44.4 &35.9 &17.2 &2.39e+02 &1.058 &0.664 &0.394 \\

\midrule
BRYBND &\texttt{\texttt{ARC-GLRT}}
&34.3 &1575.5 &35.3 &29.3 &- &2.73e+01 &0.487 &- &0.487 \\
(1000) &\texttt{\texttt{ARC-RBB}}
&29.8 &1314.3 &30.8 &25.7 &6.5 &2.73e+01 &1.352 &0.946 &0.406 \\
&\texttt{\texttt{ARC-RAPG}}
&29.8 &1288.9 &30.8 &25.7 &6.5 &2.73e+01 &1.426 &1.005 &0.421 \\
&\texttt{\texttt{ARC-URBB}}
&29.8 &1278.7 &30.8 &25.7 &6.5 &2.73e+01 &1.349 &0.956 &0.394 \\
&\texttt{\texttt{ARC-URAPG}}
&29.8 &1262.1 &30.8 &25.7 &6.5 &2.73e+01 &1.376 &0.975 &0.402 \\

\midrule
CHAINWOO &\texttt{\texttt{ARC-GLRT}}
&203.5 &5462.5 &204.5 &152.7 &- &1.07e+03 &1.576 &- &1.576 \\
(1000) &\texttt{\texttt{ARC-RBB}}
&293.1 &10705.2 &294.1 &218.7 &172.5 &1.17e+03 &26.495 &23.215 &3.280 \\
&\texttt{\texttt{ARC-RAPG}}
&299.5 &10625.8 &300.5 &225.4 &178.4 &1.17e+03 &27.363 &23.841 &3.522 \\
&\texttt{\texttt{ARC-URBB}}
&291.4 &8363.3 &292.4 &219.4 &168.0 &1.17e+03 &26.276 &23.553 &2.723 \\
&\texttt{\texttt{ARC-URAPG}}
&303.4 &8683.5 &304.4 &227.9 &178.9 &1.16e+03 &26.534 &23.434 &3.099 \\

\midrule
DIXMAANF &\texttt{\texttt{ARC-GLRT}}
&23.8 &599.6 &24.8 &22.6 &- &1.00e+00 &0.421 &- &0.421 \\
(1500) &\texttt{\texttt{ARC-RBB}}
&22.1 &572.6 &23.1 &21.2 &10.1 &1.00e+00 &1.391 &0.964 &0.427 \\
&\texttt{\texttt{ARC-RAPG}}
&22.2 &543.8 &23.2 &21.1 &10.2 &1.00e+00 &1.391 &0.969 &0.423 \\
&\texttt{\texttt{ARC-URBB}}
&22.6 &535.3 &23.6 &21.5 &10.6 &1.00e+00 &1.415 &1.009 &0.406 \\
&\texttt{\texttt{ARC-URAPG}}
&22.3 &477.2 &23.3 &21.2 &10.3 &1.00e+00 &1.385 &0.974 &0.412 \\

\midrule
DIXMAANG &\texttt{\texttt{ARC-GLRT}}
&24.9 &606.7 &25.9 &23.0 &- &1.00e+00 &0.413 &- &0.413 \\
(1500) &\texttt{\texttt{ARC-RBB}}
&24.6 &652.8 &25.6 &22.6 &11.0 &1.00e+00 &1.446 &0.982 &0.464 \\
&\texttt{\texttt{ARC-RAPG}}
&23.7 &597.5 &24.7 &22.2 &10.1 &1.00e+00 &1.378 &0.927 &0.451 \\
&\texttt{\texttt{ARC-URBB}}
&23.0 &418.1 &24.0 &21.9 &9.8 &1.00e+00 &1.270 &0.912 &0.358 \\
&\texttt{\texttt{ARC-URAPG}}
&23.3 &441.9 &24.3 &22.0 &10.0 &1.00e+00 &1.274 &0.902 &0.372 \\

\midrule
DIXMAANH &\texttt{\texttt{ARC-GLRT}}
&29.6 &680.8 &30.6 &25.9 &- &1.00e+00 &0.461 &- &0.461 \\
(1500) &\texttt{\texttt{ARC-RBB}}
&30.7 &696.6 &31.7 &26.2 &13.3 &1.00e+00 &1.705 &1.186 &0.519 \\
&\texttt{\texttt{ARC-RAPG}}
&30.5 &664.4 &31.5 &26.1 &13.1 &1.00e+00 &1.696 &1.189 &0.507 \\
&\texttt{\texttt{ARC-URBB}}
&30.5 &625.3 &31.5 &26.0 &13.1 &1.00e+00 &1.659 &1.178 &0.480 \\
&\texttt{\texttt{ARC-URAPG}}
&30.5 &619.2 &31.5 &26.0 &13.4 &1.00e+00 &1.681 &1.198 &0.483 \\

\midrule
DIXMAANJ &\texttt{\texttt{ARC-GLRT}}
&43.7 &4519.5 &44.7 &37.6 &- &1.00e+00 &2.311 &- &2.311 \\
(1500) &\texttt{\texttt{ARC-RBB}}
&48.7 &2952.9 &49.7 &42.4 &30.3 &1.00e+00 &33.409 &31.727 &1.682 \\
&\texttt{\texttt{ARC-RAPG}}
&51.1 &3324.3 &52.1 &43.5 &33.1 &1.00e+00 &37.646 &35.873 &1.774 \\
&\texttt{\texttt{ARC-URBB}}
&50.1 &2937.5 &51.1 &43.5 &32.4 &1.00e+00 &35.312 &33.738 &1.574 \\
&\texttt{\texttt{ARC-URAPG}}
&49.2 &2743.4 &50.2 &42.7 &31.3 &1.00e+00 &33.913 &32.392 &1.521 \\

\midrule
DIXMAANK &\texttt{\texttt{ARC-GLRT}}
&51.1 &4883.9 &52.1 &43.2 &- &1.00e+00 &2.458 &- &2.458 \\
(1500) &\texttt{\texttt{ARC-RBB}}
&63.1 &4382.3 &64.1 &53.1 &42.9 &1.00e+00 &40.483 &38.208 &2.275 \\
&\texttt{\texttt{ARC-RAPG}}
&63.9 &4453.5 &64.9 &53.3 &43.8 &1.00e+00 &41.436 &39.114 &2.322 \\
&\texttt{\texttt{ARC-URBB}}
&60.7 &3523.5 &61.7 &51.5 &41.1 &1.00e+00 &39.471 &37.603 &1.868 \\
&\texttt{\texttt{ARC-URAPG}}
&62.4 &3962.9 &63.4 &52.5 &42.4 &1.00e+00 &40.038 &37.941 &2.097 \\

\midrule
DIXMAANL &\texttt{\texttt{ARC-GLRT}}
&57.7 &4569.5 &58.7 &47.6 &- &1.00e+00 &2.334 &- &2.334 \\
(1500) &\texttt{\texttt{ARC-RBB}}
&65.2 &4126.9 &66.2 &55.0 &40.9 &1.00e+00 &40.609 &38.454 &2.155 \\
&\texttt{\texttt{ARC-RAPG}}
&66.0 &4103.8 &67.0 &55.1 &42.1 &1.00e+00 &40.438 &38.255 &2.183 \\
&\texttt{\texttt{ARC-URBB}}
&61.3 &3398.6 &62.3 &52.2 &37.3 &1.00e+00 &37.372 &35.571 &1.801 \\
&\texttt{\texttt{ARC-URAPG}}
&65.4 &3721.6 &66.4 &54.6 &41.3 &1.00e+00 &39.836 &37.842 &1.995 \\

\midrule
EXTROSNB &\texttt{\texttt{ARC-GLRT}}
&1824.2 &54022.6 &1825.2 &1274.9 &- &1.47e-08 &16.641 &- &16.641 \\
(1000) &\texttt{\texttt{ARC-RBB}}
&1344.0 &192873.0 &1345.0 &1094.9 &1236.4 &2.99e-06 &72.713 &12.047 &60.666 \\
&\texttt{\texttt{ARC-RAPG}}
&1341.0 &192160.7 &1342.0 &1097.3 &1234.1 &2.99e-06 &71.442 &11.863 &59.579 \\
&\texttt{\texttt{ARC-URBB}}
&1383.9 &198543.3 &1384.9 &1129.2 &1276.2 &2.99e-06 &73.391 &12.177 &61.215 \\
&\texttt{\texttt{ARC-URAPG}}
&1397.9 &200543.8 &1398.9 &1121.8 &1291.3 &2.98e-06 &73.764 &12.293 &61.471 \\

\midrule
FLETCHCR &\texttt{\texttt{ARC-GLRT}}
&1969.9 &42563.3 &1970.9 &1327.2 &- &1.20e+00 &12.710 &- &12.710 \\
(1000) &\texttt{\texttt{ARC-RBB}}
&1982.0 &53970.1 &1983.0 &1357.0 &774.8 &1.20e+00 &38.237 &13.914 &24.324 \\
&\texttt{\texttt{ARC-RAPG}}
&1984.0 &53642.5 &1985.0 &1368.1 &787.8 &1.20e+00 &38.062 &13.562 &24.500 \\
&\texttt{\texttt{ARC-URBB}}
&1980.8 &52054.4 &1981.8 &1365.5 &782.0 &1.20e+00 &38.458 &14.402 &24.056 \\
&\texttt{\texttt{ARC-URAPG}}
&1976.8 &51214.6 &1977.8 &1361.9 &771.3 &1.20e+00 &38.390 &14.234 &24.156 \\

\midrule
FREUROTH &\texttt{\texttt{ARC-GLRT}}
&36.7 &366.1 &37.7 &30.3 &- &1.17e+05 &0.302 &- &0.302 \\
(1000) &\texttt{\texttt{ARC-RBB}}
&33.8 &1122.4 &34.8 &30.2 &21.2 &1.17e+05 &0.697 &0.205 &0.492 \\
&\texttt{\texttt{ARC-RAPG}}
&36.0 &1371.1 &37.0 &31.6 &23.5 &1.17e+05 &0.787 &0.222 &0.565 \\
&\texttt{\texttt{ARC-URBB}}
&36.5 &1400.4 &37.5 &30.2 &24.0 &1.17e+05 &0.885 &0.247 &0.639 \\
&\texttt{\texttt{ARC-URAPG}}
&34.8 &1207.6 &35.8 &30.1 &22.0 &1.17e+05 &0.810 &0.219 &0.591 \\

\midrule
GENHUMPS &\texttt{\texttt{ARC-GLRT}}
&1702.9 &50838.9 &1703.9 &1039.5 &- &8.73e-13 &15.912 &- &15.912 \\
(1000) &\texttt{\texttt{ARC-RBB}}
&1525.5 &41837.4 &1526.5 &922.5 &9.3 &7.06e-12 &20.876 &0.206 &20.670 \\
&\texttt{\texttt{ARC-RAPG}}
&1525.4 &41841.4 &1526.4 &922.4 &9.3 &8.90e-12 &19.249 &0.218 &19.030 \\
&\texttt{\texttt{ARC-URBB}}
&1525.5 &41729.6 &1526.5 &922.6 &9.3 &8.34e-12 &22.510 &0.199 &22.311 \\
&\texttt{\texttt{ARC-URAPG}}
&1525.4 &41762.7 &1526.4 &922.5 &9.3 &1.44e-11 &21.270 &0.200 &21.071 \\

\midrule
GENROSE &\texttt{\texttt{ARC-GLRT}}
&1058.6 &20703.8 &1059.6 &711.7 &- &1.00e+00 &2.818 &- &2.818 \\
(500) &\texttt{\texttt{ARC-RBB}}
&1079.7 &28236.6 &1080.7 &736.9 &166.5 &1.00e+00 &4.092 &0.944 &3.149 \\
&\texttt{\texttt{ARC-RAPG}}
&1151.5 &29887.7 &1152.5 &780.7 &191.6 &1.00e+00 &3.594 &0.863 &2.732 \\
&\texttt{\texttt{ARC-URBB}}
&1081.1 &28124.1 &1082.1 &737.5 &164.3 &1.00e+00 &3.848 &0.890 &2.958 \\
&\texttt{\texttt{ARC-URAPG}}
&1083.4 &27979.1 &1084.4 &737.1 &169.7 &1.00e+00 &4.268 &1.005 &3.263 \\

\midrule
NONCVXU2 &\texttt{\texttt{ARC-GLRT}}
&65.5 &8065.7 &66.5 &61.5 &- &2.32e+03 &2.083 &- &2.083 \\
(1000) &\texttt{\texttt{ARC-RBB}}
&127.5 &7660.5 &128.5 &122.1 &124.5 &2.32e+03 &78.082 &75.564 &2.518 \\
&\texttt{\texttt{ARC-RAPG}}
&122.4 &7637.8 &123.4 &118.9 &119.6 &2.32e+03 &77.664 &75.163 &2.501 \\
&\texttt{\texttt{ARC-URBB}}
&123.4 &7845.9 &124.4 &119.6 &120.6 &2.32e+03 &78.858 &76.286 &2.571 \\
&\texttt{\texttt{ARC-URAPG}}
&113.8 &7211.2 &114.8 &109.9 &111.0 &2.32e+03 &71.320 &69.111 &2.209 \\

\midrule
NONCVXUN &\texttt{\texttt{ARC-GLRT}}
&300.9 &224970.5 &301.9 &294.5 &- &2.32e+03 &49.239 &- &49.239 \\
(1000) &\texttt{\texttt{ARC-RBB}}
&2025.5 &283403.7 &2026.5 &2018.8 &2021.6 &2.32e+03 &1414.201 &1346.432 &67.769 \\
&\texttt{\texttt{ARC-RAPG}}
&2116.6 &295837.9 &2117.6 &2109.8 &2112.9 &2.32e+03 &1479.271 &1407.690 &71.581 \\
&\texttt{\texttt{ARC-URBB}}
&2483.0 &350646.4 &2484.0 &2477.1 &2479.3 &2.32e+03 &1734.238 &1650.711 &83.527 \\
&\texttt{\texttt{ARC-URAPG}}
&2105.2 &294676.3 &2106.2 &2098.3 &2101.3 &2.32e+03 &1466.452 &1393.276 &73.176 \\

\midrule
OSCIPATH &\texttt{\texttt{ARC-GLRT}}
&39.3 &6079.9 &40.3 &31.3 &- &3.12e-01 &0.516 &- &0.516 \\
(500) &\texttt{\texttt{ARC-RBB}}
&56.4 &5658.1 &57.4 &49.5 &27.8 &3.12e-01 &5.502 &5.204 &0.298 \\
&\texttt{\texttt{ARC-RAPG}}
&57.0 &5747.5 &58.0 &49.6 &28.1 &3.12e-01 &5.702 &5.348 &0.354 \\
&\texttt{\texttt{ARC-URBB}}
&58.7 &6007.2 &59.7 &52.0 &30.5 &3.12e-01 &6.163 &5.845 &0.318 \\
&\texttt{\texttt{ARC-URAPG}}
&57.3 &5799.2 &58.3 &50.8 &28.7 &3.12e-01 &5.866 &5.532 &0.334 \\

\midrule
TOINTGSS &\texttt{\texttt{ARC-GLRT}}
&19.2 &118.6 &20.2 &14.1 &- &1.00e+01 &0.119 &- &0.119 \\
(1000) &\texttt{\texttt{ARC-RBB}}
&15.4 &368.2 &16.4 &12.2 &10.3 &1.00e+01 &0.269 &0.086 &0.183 \\
&\texttt{\texttt{ARC-RAPG}}
&15.9 &494.7 &16.9 &12.4 &10.9 &1.00e+01 &0.296 &0.081 &0.215 \\
&\texttt{\texttt{ARC-URBB}}
&15.0 &322.9 &16.0 &11.8 &10.0 &1.00e+01 &0.233 &0.076 &0.156 \\
&\texttt{\texttt{ARC-URAPG}}
&15.6 &372.6 &16.6 &12.5 &10.6 &1.00e+01 &0.255 &0.082 &0.173 \\

\midrule
TQUARTIC &\texttt{\texttt{ARC-GLRT}}
&63.9 &282.1 &64.9 &52.9 &- &2.37e-14 &0.363 &- &0.363 \\
(1000) &\texttt{\texttt{ARC-RBB}}
&71.5 &838.6 &72.5 &55.6 &6.9 &1.99e-13 &0.598 &0.084 &0.513 \\
&\texttt{\texttt{ARC-RAPG}}
&71.0 &934.8 &72.0 &55.4 &6.6 &8.58e-11 &0.674 &0.090 &0.584 \\
&\texttt{\texttt{ARC-URBB}}
&71.3 &566.7 &72.3 &55.6 &6.9 &1.04e-10 &0.521 &0.091 &0.431 \\
&\texttt{\texttt{ARC-URAPG}}
&72.3 &926.4 &73.3 &56.5 &6.7 &3.79e-10 &0.639 &0.087 &0.552 \\

\midrule
WOODS &\texttt{\texttt{ARC-GLRT}}
&286.4 &4542.6 &287.4 &210.2 &- &8.66e-15 &1.561 &- &1.561 \\
(1000) &\texttt{\texttt{ARC-RBB}}
&382.8 &9574.8 &383.8 &264.5 &6.2 &1.88e-12 &3.733 &0.067 &3.666 \\
&\texttt{\texttt{ARC-RAPG}}
&381.3 &9426.5 &382.3 &263.9 &5.5 &3.15e-14 &3.340 &0.051 &3.288 \\
&\texttt{\texttt{ARC-URBB}}
&382.2 &9486.3 &383.2 &264.2 &6.1 &1.67e-14 &3.722 &0.067 &3.655 \\
&\texttt{\texttt{ARC-URAPG}}
&381.7 &9542.6 &382.7 &264.6 &6.3 &1.67e-12 &3.833 &0.068 &3.765 \\
\bottomrule
\caption{\small Results on the CUTEst problems}
\label{tab:cutest}
\end{longtable}
}
{\centering
\scriptsize
\begin{longtable}{c|c|ccc|ccc}

  \toprule
  \multirow{2}{*}{Problem}
  &\multirow{2}{*}{Index}
  &\multicolumn{3}{c|}{\texttt{ARC-URBB}}
  &\multicolumn{3}{c}{\texttt{ARC-URAPG}} \\
  \cmidrule(r){3-5} \cmidrule(r){6-8}
  &&  \texttt{ARC-GLRT}  & \texttt{ARC-RBB} & \texttt{ARC-RAPG}
  &  \texttt{ARC-GLRT} & \texttt{ARC-RBB} & \texttt{ARC-RAPG} \\

  \midrule
  \multirow{3}{*}{BROYDN7D}
  &$n_i$ &4 &5 &2 &4 &6 &4 \\
  &$n_{\text{prod}}$ &4 &5 &6 &5 &8 &6 \\
  &$n_g$ &3 &3 &1 &4 &4 &2 \\

  \midrule
  \multirow{3}{*}{BRYBND}
  &$n_i$ &10 &0 &0 &10 &0 &0 \\
  &$n_{\text{prod}}$ &7 &7 &4 &7 &9 &6 \\
  &$n_g$ &10 &0 &0 &10 &0 &0 \\

  \midrule
  \multirow{3}{*}{CHAINWOO}
  &$n_i$ &0 &5 &7 &0 &3 &5 \\
  &$n_{\text{prod}}$ &0 &10 &9 &0 &10 &9 \\
  &$n_g$ &0 &6 &6 &0 &2 &5 \\

  \midrule
  \multirow{3}{*}{DIXMAANF}
  &$n_i$ &7 &0 &2 &6 &1 &0 \\
  &$n_{\text{prod}}$ &5 &6 &5 &7 &6 &7 \\
  &$n_g$ &6 &0 &1 &6 &2 &0 \\

  \midrule
  \multirow{3}{*}{DIXMAANG}
  &$n_i$ &7 &5 &4 &6 &6 &3 \\
  &$n_{\text{prod}}$ &10 &10 &9 &9 &10 &9 \\
  &$n_g$ &6 &4 &4 &7 &6 &2 \\

  \midrule
  \multirow{3}{*}{DIXMAANH}
  &$n_i$ &3 &2 &2 &4 &2 &1 \\
  &$n_{\text{prod}}$ &7 &6 &5 &6 &7 &6 \\
  &$n_g$ &4 &2 &2 &3 &2 &1 \\

  \midrule
  \multirow{3}{*}{DIXMAANJ}
  &$n_i$ &1 &5 &5 &1 &5 &4 \\
  &$n_{\text{prod}}$ &10 &7 &7 &10 &9 &7 \\
  &$n_g$ &1 &6 &5 &1 &3 &3 \\

  \midrule
  \multirow{3}{*}{DIXMAANK}
  &$n_i$ &0 &7 &7 &1 &5 &4 \\
  &$n_{\text{prod}}$ &9 &7 &9 &8 &8 &7 \\
  &$n_g$ &0 &5 &7 &0 &5 &5 \\

  \midrule
  \multirow{3}{*}{DIXMAANL}
  &$n_i$ &4 &7 &6 &2 &4 &5 \\
  &$n_{\text{prod}}$ &8 &8 &8 &9 &7 &7 \\
  &$n_g$ &2 &6 &6 &1 &3 &5 \\

  \midrule
  \multirow{3}{*}{EXTROSNB}
  &$n_i$ &10 &3 &3 &10 &2 &3 \\
  &$n_{\text{prod}}$ &0 &3 &3 &0 &2 &3 \\
  &$n_g$ &10 &3 &3 &10 &5 &5 \\

  \midrule
  \multirow{3}{*}{FLETCHCR}
  &$n_i$ &5 &4 &5 &5 &5 &6 \\
  &$n_{\text{prod}}$ &4 &8 &8 &5 &9 &8 \\
  &$n_g$ &2 &2 &6 &2 &2 &9 \\

  \midrule
  \multirow{3}{*}{FREUROTH}
  &$n_i$ &5 &4 &4 &7 &3 &5 \\
  &$n_{\text{prod}}$ &0 &5 &7 &0 &4 &6 \\
  &$n_g$ &5 &5 &6 &4 &4 &7 \\

  \midrule
  \multirow{3}{*}{GENHUMPS}
  &$n_i$ &10 &1 &0 &10 &2 &0 \\
  &$n_{\text{prod}}$ &10 &7 &8 &10 &7 &7 \\
  &$n_g$ &10 &0 &0 &10 &1 &0 \\

  \midrule
  \multirow{3}{*}{GENROSE}
  &$n_i$ &3 &5 &4 &3 &4 &4 \\
  &$n_{\text{prod}}$ &1 &5 &5 &1 &6 &5 \\
  &$n_g$ &3 &4 &6 &3 &5 &5 \\

  \midrule
  \multirow{3}{*}{NONCVXU2}
  &$n_i$ &0 &7 &6 &0 &9 &7 \\
  &$n_{\text{prod}}$ &6 &4 &6 &7 &8 &6 \\
  &$n_g$ &0 &5 &6 &0 &8 &7 \\

  \midrule
  \multirow{3}{*}{NONCVXUN}
  &$n_i$ &0 &4 &4 &0 &7 &5 \\
  &$n_{\text{prod}}$ &1 &4 &4 &2 &7 &5 \\
  &$n_g$ &0 &4 &4 &0 &7 &5 \\

  \midrule
  \multirow{3}{*}{OSCIPATH}
  &$n_i$ &0 &4 &3 &0 &4 &5 \\
  &$n_{\text{prod}}$ &5 &4 &3 &5 &4 &5 \\
  &$n_g$ &0 &3 &3 &0 &5 &4 \\

  \midrule
  \multirow{3}{*}{TOINTGSS}
  &$n_i$ &6 &1 &3 &7 &1 &1 \\
  &$n_{\text{prod}}$ &1 &5 &9 &0 &6 &9 \\
  &$n_g$ &7 &2 &4 &7 &2 &1 \\

  \midrule
  \multirow{3}{*}{TQUARTIC}
  &$n_i$ &4 &3 &4 &3 &4 &4 \\
  &$n_{\text{prod}}$ &0 &9 &10 &0 &4 &7 \\
  &$n_g$ &5 &3 &4 &3 &4 &4 \\

  \midrule
  \multirow{3}{*}{WOODS}
  &$n_i$ &0 &6 &1 &0 &4 &4 \\
  &$n_{\text{prod}}$ &0 &6 &4 &0 &6 &3 \\
  &$n_g$ &0 &6 &1 &0 &4 &2 \\

  \bottomrule

  \caption{\small
  The number of times \texttt{ARC-URBB} (or \texttt{\texttt{ARC-URAPG}}) performs better than the other three algorithms in 10 realizations.
  }
  \label{tab:cutestfull}
\end{longtable}
}

\begin{figure}[!h]
    \centering
    \includegraphics[width=9cm]{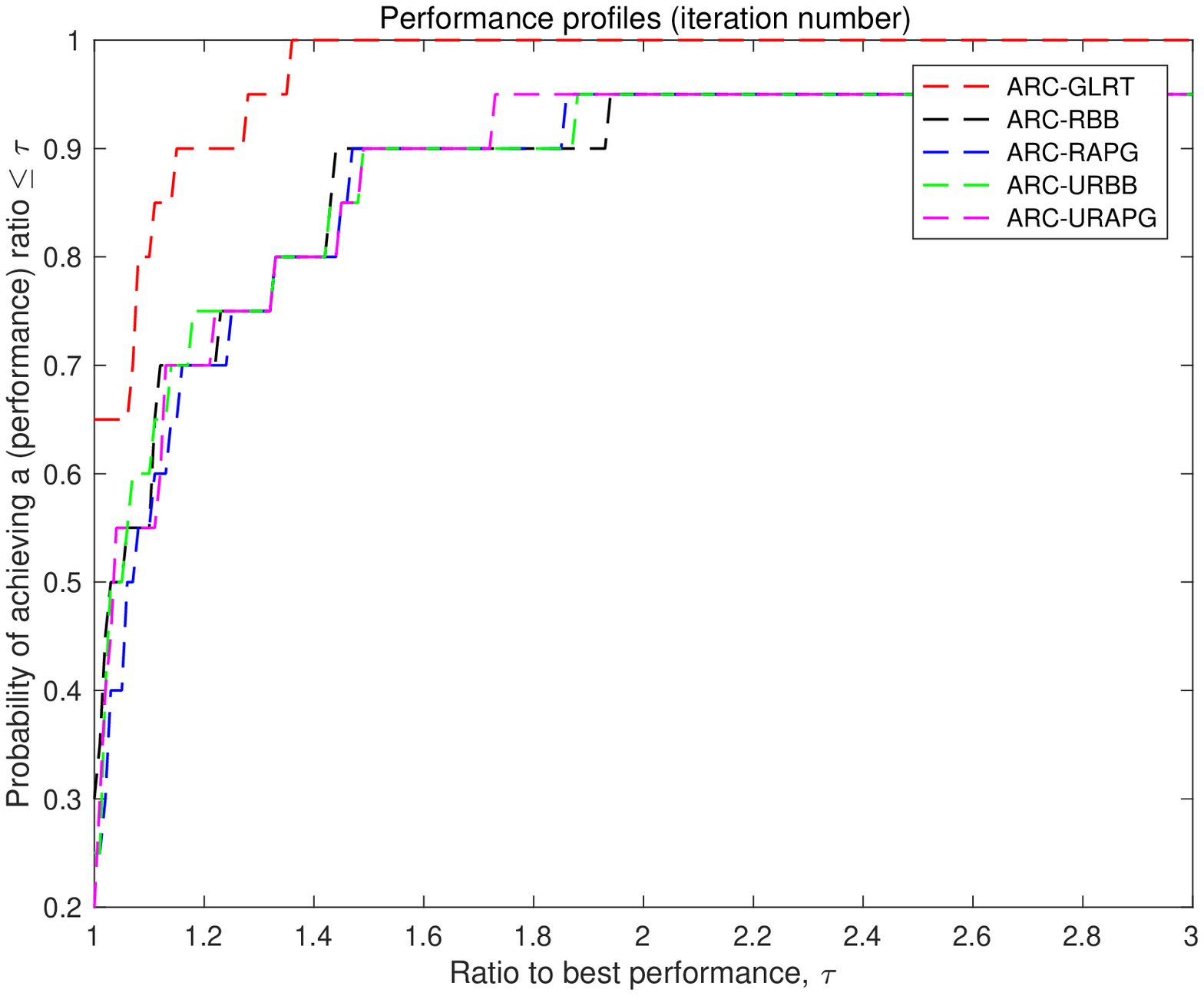}
    \caption{Performance profiles for iteration number for \texttt{ARC-GLRT}, \texttt{ARC-RBB}, \texttt{ARC-RAPG}, \texttt{ARC-URBB} and \texttt{ARC-URAPG} on the CUTEst problems}
    \label{fig:1}
  \end{figure}
  \begin{figure}[!h]
    \centering
    \includegraphics[width=9cm]{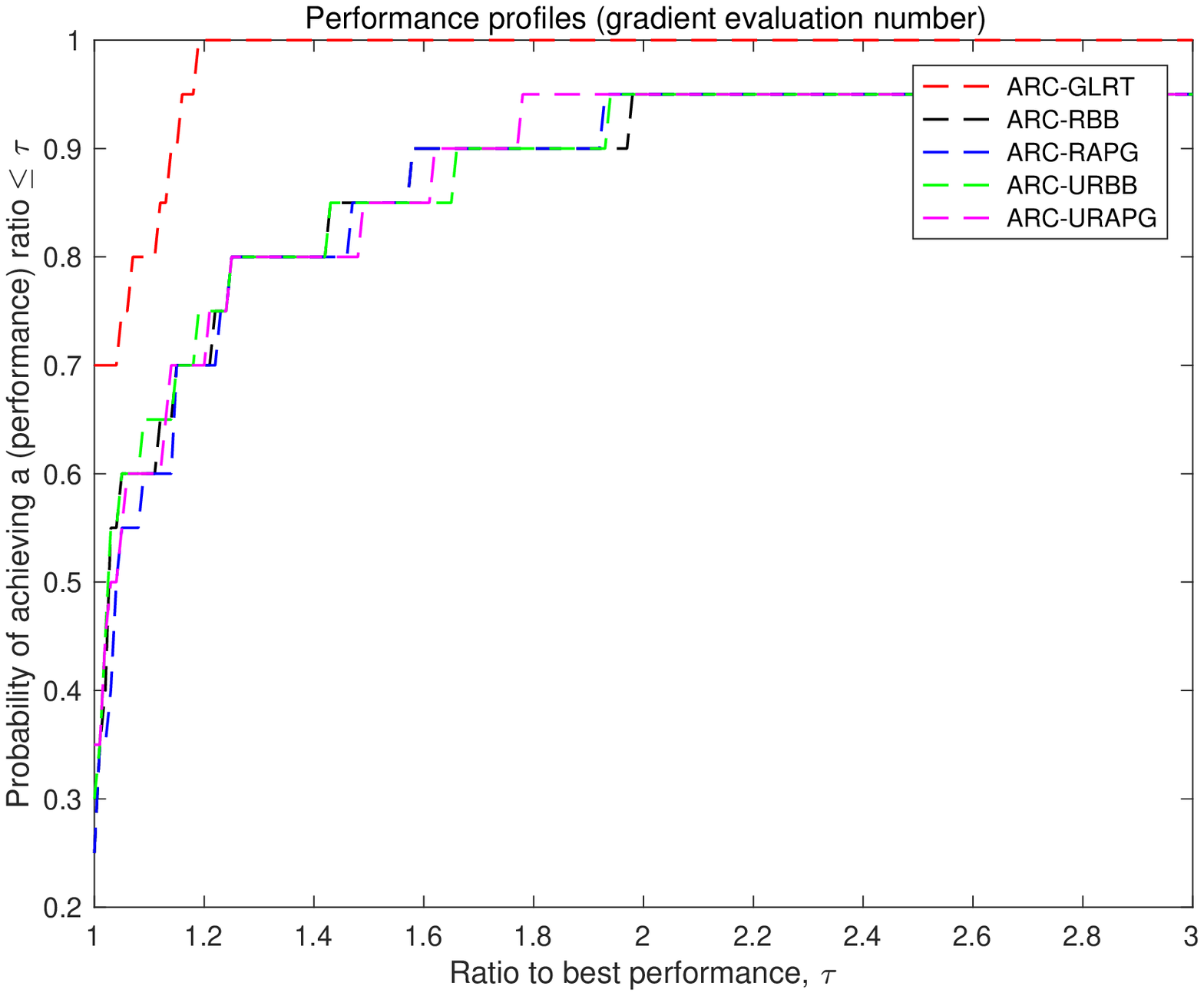}
    \caption{Performance profiles for gradient evaluations for \texttt{ARC-GLRT}, \texttt{ARC-RBB}, \texttt{ARC-RAPG}, \texttt{ARC-URBB} and \texttt{ARC-URAPG} on the CUTEst problems}
    \label{fig:2}
  \end{figure}
  \begin{figure}[!h]
    \centering
    \includegraphics[width=9cm]{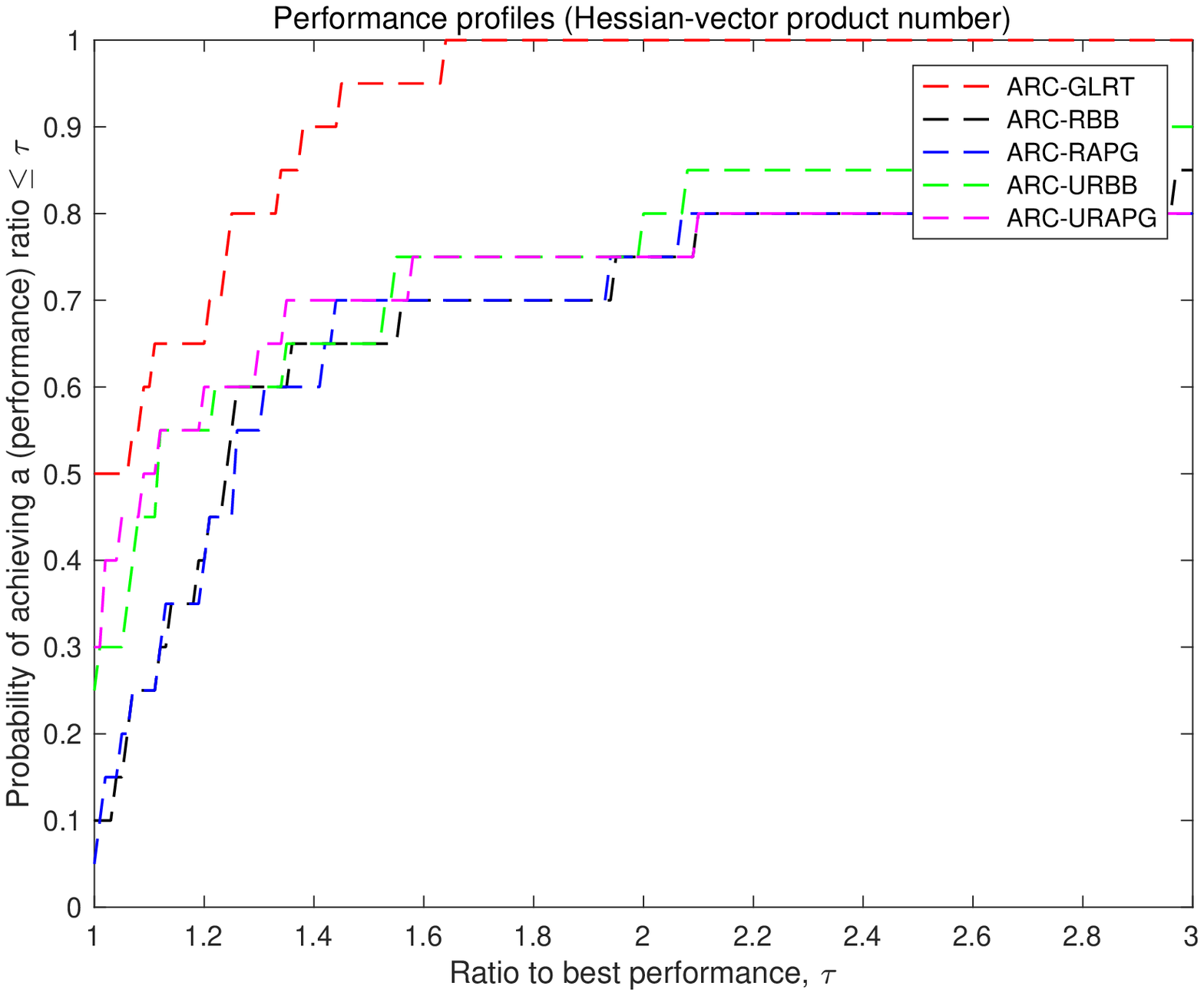}
    \caption{Performance profiles for Hessian-vector products for \texttt{ARC-GLRT}, \texttt{ARC-RBB}, \texttt{ARC-RAPG}, \texttt{ARC-URBB} and \texttt{ARC-URAPG} on the CUTEst problems}
    \label{fig:3}
  \end{figure}
  \begin{figure}[!h]
    \centering
    \includegraphics[width=9cm]{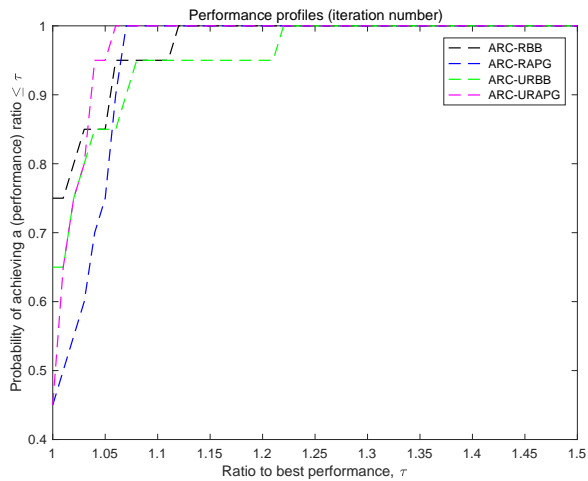}
    \caption{Performance profiles for iteration number for \texttt{ARC-RBB}, \texttt{ARC-RAPG}, \texttt{ARC-URBB} and \texttt{ARC-URAPG} on the CUTEst problems}
    \label{fig:4}
  \end{figure}
  \begin{figure}[!h]
    \centering
    \includegraphics[width=9cm]{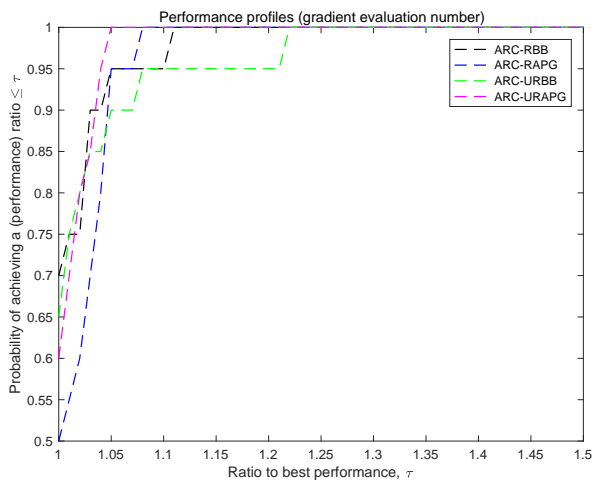}
    \caption{Performance profiles for gradient evaluation for \texttt{ARC-RBB}, \texttt{ARC-RAPG}, \texttt{ARC-URBB} and \texttt{ARC-URAPG} on the CUTEst problems}
    \label{fig:5}
  \end{figure}
  \begin{figure}[!h]
    \centering
    \includegraphics[width=9cm]{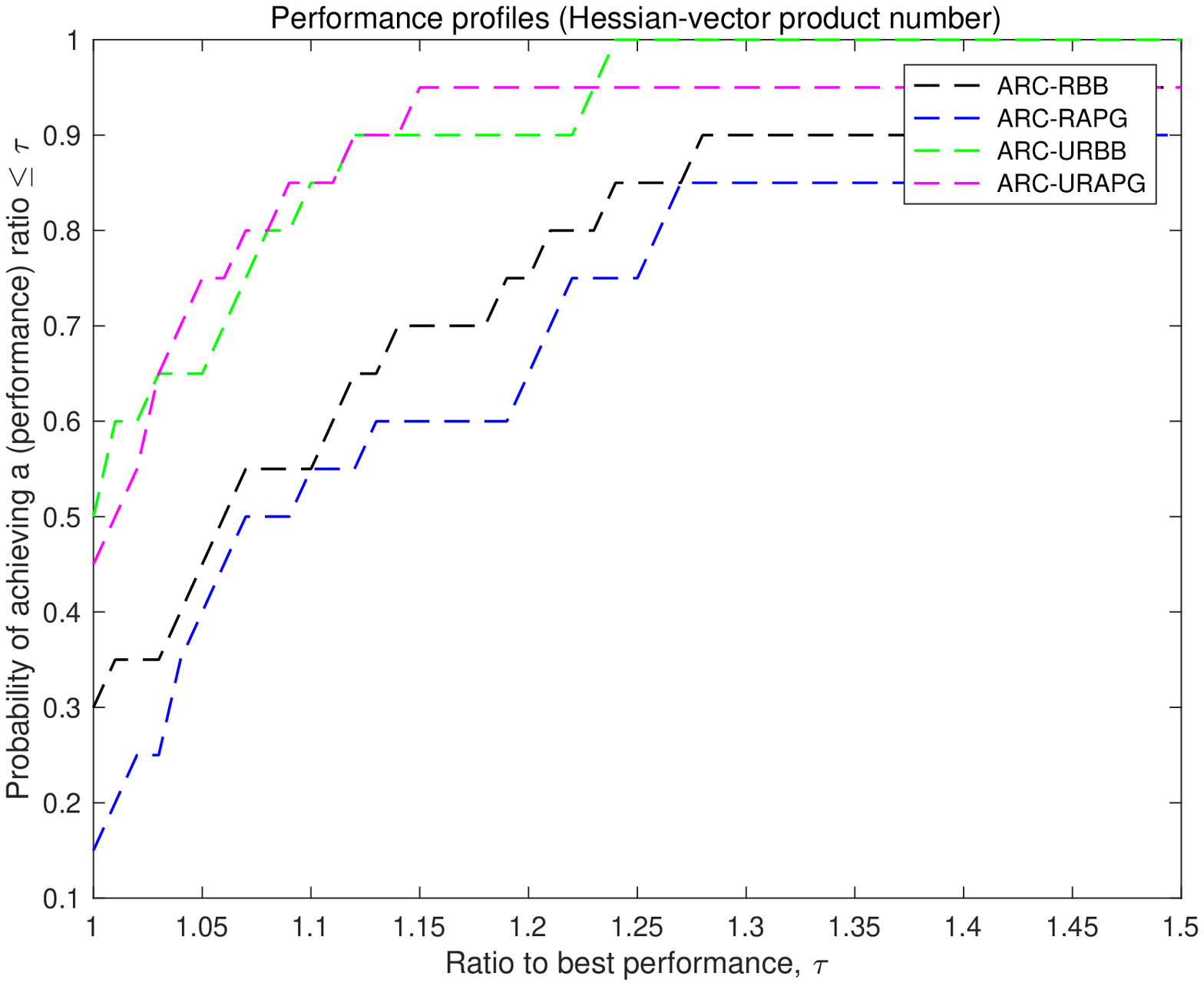}
    \caption{Performance profiles for Hessian-vector products for \texttt{ARC-RBB}, \texttt{ARC-RAPG}, \texttt{ARC-URBB} and \texttt{ARC-URAPG} on the CUTEst problems}
    \label{fig:6}
  \end{figure}

\section{Conclusion}\label{sec:6}
In this paper, we propose a new convex reformulation for the subproblem of the cubic regularization methods.
Based on our reformulation, we propose a variant of the non-adaptive CR algorithm that admits an iteration complexity $\mO(\epsilon_g^{-3/2})$ to find an $(\epsilon_g,\sqrt{L\epsilon_g})$ stationary point.
Moreover, we show that an operation complexity bound of our algorithm is $\tO(\epsilon_g^{-7/4})$ when the subproblems are solved by Nesterov's accelerated gradient method and the approximated eigenvalues are computed by the Lanczos procedure.
We also propose a variant of the ARC algorithm with similar complexity guarantees.
Both of our iteration and operation complexity bounds match the best known bounds in the literature for algorithms that based on first- and second-order information.
Numerical experiments on the ARC equipped with our reformulation for solving subproblems also illustrate the effectiveness of our approach.

For future research, we would like to explore if our reformulation can be extended to solve auxiliary problems in tensor methods for
unconstrained optimization (\cite{birgin2017worst,jiang2020unified,nesterov2021implementable,grapiglia2021inexact}), which were shown to have fast global convergence guarantees.
It is well known that  the auxiliary problem in the model function in each iteration of the tensor method is a regularized $p$-th order Taylor approximation, which is difficult to solve.
Two recent works \cite{nesterov2021implementable,grapiglia2021inexact}
show that for $p=3$ and   convex minimization problems, the Tensor model can be solved by an adaptive Bregman proximal gradient method, where each subproblem is of form
\[
\min_x c^{\top}x+\frac{1}{2}x^{\top}Ax+\frac{\gamma}{3+\nu}\|x\|^{3+\nu}, \text{ with  } A\succeq0.\]
It will be interesting to see if the methods in  \cite{nesterov2021implementable,grapiglia2021inexact}
  can be extended to nonconvex minimization problems and still have similar subproblems, and if  our reformulation can be extended to solving these subproblems.

\begin{acknowledgements}
This paper is dedicated to the memory of Professor Duan Li.
The first and third authors would like to thank Professor Duan Li, for his advice, help and encouragement during their Ph.D and postdoctoral time in the Chinese University of Hong Kong. The authors would like to thank the two anonymous referees for the invaluable comments that improve the quality of the paper significantly.
The first author is supported in part by NSFC 11801087 and 12171100.
\end{acknowledgements}

\section{Appendix}
\subsection{Proofs for Lemma \ref{lem:bd1}}
In  this case, $s_k$ is approximately computed by \eqref{eq:mkr} or \eqref{eq:amkr}.
Let $\sigma=L/2$ for Algorithm \ref{alg:cr} and $\sigma=\sigma_k$ for Algorithm \ref{alg:ourarc}. Noting that $d_k=s_k$, by \eqref{eq:gmkr}, we have
\begin{equation}
\label{eq:case2-1-opt-le}
\left\|g_k + H_kd_k +3\epsilon_E d_k + \sigma\|d_k\|d_k\right\| \le \epsilon_S.
\end{equation}
Since
\begin{equation*}
\label{eq:case2-1-opt-ge}
\left\|g_k + H_kd_k +3\epsilon_E d_k + \sigma\|d_k\|d_k\right\| \ge \left\|g_k + H_kd_k\right\|-3\epsilon_E \|d_k\|-\sigma\|d_k\|^2,
\end{equation*}
we have
\begin{equation}
\label{eq:case2-1-g-Hd}
\left\|g_k + H_kd_k\right\|\le \epsilon_S+3\epsilon_E \|d_k\|+\sigma\|d_k\|^2.
\end{equation}
Due to $\alpha_k\le\lambda_{\text{min}}(H_k)+\epsilon_E$ and $\alpha_k\ge-\epsilon_E$, we have
\begin{equation}
\label{eq:case2-1-lambda}
\lambda_{\text{min}}(H_k)\ge -2\epsilon_E.
\end{equation}
Using \eqref{eq:case2-1-opt-le}, we have
\begin{equation}
\label{eq:case2-1-gd-dHd}
\begin{aligned}
g_k^{\top}d_k + d_k^{\top}H_kd_k +3\epsilon_E \|d_k\|^2+ \sigma\|d_k\|^3 &\le \left\|g_k + H_kd_k +3\epsilon_E d_k + \sigma\|d_k\|d_k\right\|\cdot\|d_k\|\\
& \le \epsilon_S\|d_k\|.
\end{aligned}
\end{equation}
Then  we have
\begin{equation}
\label{eq:case2-1-gd}
\begin{aligned}
g_k^{\top}d_k &\overset{ \eqref{eq:case2-1-gd-dHd}}{\le}\epsilon_S\|d_k\| - d_k^{\top}H_kd_k - 3\epsilon_E \|d_k\|^2 - \sigma\|d_k\|^3\\
& = \epsilon_S\|d_k\| - (d_k^{\top}H_kd_k + 3\epsilon_E \|d_k\|^2) - \sigma\|d_k\|^3\\
& \overset{\eqref{eq:case2-1-lambda}}{\le}\epsilon_S\|d_k\| -\epsilon_E \|d_k\|^2- \sigma\|d_k\|^3,
\end{aligned}
\end{equation}
By \eqref{eq:case2-1-gd-dHd}, we also have
\begin{equation}
\label{eq:case2-1-dHd}
d_k^{\top}H_kd_k \le -g_k^{\top}d_k - 3\epsilon_E \|d_k\|^2 - \sigma\|d_k\|^3 + \epsilon_S\|d_k\|.
\end{equation}
Hence we obtain
\begin{equation*}
\label{eq:case2-1-fun-le}
\begin{aligned}
m(d_{k}) &= g_k^{\top}d_k + \frac{1}{2}d_k^{\top}H_kd_k + \frac{\sigma}{3}\|d_k\|^3 \\
&\overset{ \eqref{eq:case2-1-dHd}}{ \le} g_k^{\top}d_k - \frac{1}{2}g_k^{\top}d_k -\frac{3}{2} \epsilon_E\|d_k\|^2 - \frac{\sigma}{2}\|d_k\|^3 + \frac{1}{2}\epsilon_S\|d_k\| + \frac{\sigma}{3}\|d_k\|^3 \\
& = \frac{1}{2}g_k^{\top}d_k - \frac{3}{2}\epsilon_E\|d_k\|^2 + \frac{1}{2}\epsilon_S\|d_k\| - \frac{\sigma}{6}\|d_k\|^3 \\
&\overset{ \eqref{eq:case2-1-gd}}{\le} \frac{1}{2}\epsilon_S\|d_k\| - \frac{\sigma}{2}\|d_k\|^3 -2 \epsilon_E\|d_k\|^2 + \frac{1}{2}\epsilon_S\|d_k\| - \frac{\sigma}{6}\|d_k\|^3 \\
& = - \frac{2\sigma}{3}\|d_k\|^3 - 2\epsilon_E\|d_k\|^2 + \epsilon_S\|d_k\|,
\end{aligned}
\end{equation*}
or equivalently,
\begin{equation*}
\label{eq:case2-1-fun-ge}
-m(d_{k})\ge \frac{2\sigma}{3}\|d_k\|^3 + 2\epsilon_E\|d_k\|^2 - \epsilon_S\|d_k\|.
\end{equation*}
Due to $\epsilon_S=\epsilon_E^2/L$ as in Condition \ref{con:eps}, it follows  that
\begin{equation}
\label{eq:case1-mdk}
-m(d_{k})\ge\frac{2\sigma}{3}\|d_k\|^3 +2 \epsilon_E\|d_k\|^2 - \frac{\epsilon_E^2}{L}\|d_k\|.
\end{equation}

Now suppose that $x_k+d_k$ is not an $(\epsilon_g,\sqrt{L\epsilon_g})$  stationary point. We then have either (i) $\|\nabla f(x_k+d_k)\|>\epsilon_g$, or (ii) $\|\nabla f(x_k+d_k)\|\le \epsilon_g,~\lambda_{\min}(\nabla^2 f(x_k+d_k)) < -\sqrt{L\epsilon_g}$. Let us consider the following cases (i) and (ii) separately.
\begin{itemize}
\item[(i)]  Using \eqref{eq:case2-1-g-Hd}, Taylor expansion for $\nabla f(x_k+d_k)$ and \eqref{eq:hlip}, we have
\begin{equation}
\label{eq:case2-1-g-next-le}
\begin{array}{rcl}
\|\nabla f(x_k+d_k)\|& \le& \|g_k + H_kd_k\| + \frac{L}{2}\|d_k\|^2\\
&\le&\epsilon_S +3\epsilon_E\|d_k\| + (\sigma+\frac{L}{2})\|d_k\|^2,
\end{array}
\end{equation}
where the first inequality follows from Lemma 1 in \cite{nesterov2006cubic}.
Using $\|\nabla f(x_k+d_k)\|>\epsilon_g$ and \eqref{eq:case2-1-g-next-le}, we obtain
\begin{equation*}
\label{eq:case2-1-epsg}
\epsilon_g \leq \epsilon_S+3\epsilon_E\|d_k\| + (\sigma+\frac{L}{2})\|d_k\|^2.
\end{equation*}
This gives
$$\|d_{k}\|\ge\frac{-3\epsilon_E+\sqrt{9\epsilon_E^2-4(\sigma+\frac{L}{2})(\epsilon_S-\epsilon_g)}}{2\sigma+L}
\quad\text{or}\quad
\|d_{k}\|\le\frac{-3\epsilon_E-\sqrt{9\epsilon_E^2-4(\sigma+\frac{L}{2})(\epsilon_S-\epsilon_g)}}{2\sigma+L}.
$$
where the second case is discarded since $\|d_{k}\|\ge 0$. Due to $\epsilon_S=\epsilon_E^2/L$ and $\epsilon_g=\epsilon_E^2/L$  in Condition \ref{con:eps}, we further have
\begin{equation}\label{eq:case1-dkor}
        \|d_{k}\| \ge \frac{-3\epsilon_E+\sqrt{25\epsilon_E^2+(32\sigma/L)\epsilon_E^2}}{2\sigma+L}.
\end{equation}

\begin{itemize}
\item For Algorithm \ref{alg:cr}, due to $\sigma=L/2$, \eqref{eq:case1-dkor} yields
\begin{equation*}
\|d_{k}\| \ge \frac{3}{2L}\epsilon_E,
\end{equation*}
Therefore, using $\sigma=L/2$, we have
\begin{equation*}
-m(d_{k})\overset{\eqref{eq:case1-mdk}}{\ge} \frac{33\epsilon_E^3}{8L^2}.
\end{equation*}
\item
Note that from Lemma \ref{lem:gamma}, we have $\sigma=\sigma_k\le\max\{\sigma_0, \frac{\gamma L}{2}\}$ for Algorithm \ref{alg:ourarc}.
Therefore if $\sigma\ge L/2$, \eqref{eq:case1-dkor} yields
\begin{equation}
\label{eq:case1-dkbd1}
\|d_{k}\| \ge \frac{3}{(\max\{\gamma,2\sigma_0/L\}+1)L}\epsilon_E,
\end{equation}
and if $\sigma<L/2$, \eqref{eq:case1-dkor} yields
\begin{equation}
\label{eq:case1-dkbd2}
\|d_{k}\| \ge \frac{1}{L}\epsilon_E,
\end{equation}

Using \eqref{eq:case1-dkor}, we claim the following inequality holds,
\begin{equation}
\label{eq:keyeq}
\frac{2\sigma}{3}\|d_k\|^2 +2 \epsilon_E\|d_k\| - \frac{\epsilon_E^2}{L}\ge \frac{3\epsilon_E^2}{L},
\end{equation}
provided $\sigma>0$.
Now combining \eqref{eq:case1-mdk}, \eqref{eq:case1-dkbd1}, \eqref{eq:case1-dkbd2} and \eqref{eq:keyeq}, we have
\[
-m(d_k)\ge \frac{3\epsilon_E^2}{L}\|d_k\|\ge \min\left\{\frac{3}{\max\{\gamma,2\sigma_0/L\}+1},1\right\}\cdot\frac{3\epsilon_E^3}{L^2}
\]
Indeed, to prove inequality \eqref{eq:keyeq}, we only need to show, using \eqref{eq:case1-dkor},
\begin{equation*}
\frac{2(\sqrt{25+32a}-3)^2}{3(4a+4+1/a)}+\frac{2(\sqrt{25+32a}-3)}{2a+1}=\frac{2}{3}\frac{32a^2+16a+3\sqrt{25+32a}-9}{(2a+1)^2}\ge 4,
\end{equation*}
for $a=\sigma/L>0$. Note that
the above inequality is equivalent to
\[
\psi(a)=8a^2-8a+3\sqrt{25+32a}-15\ge0.
\]
As $\psi(0)=0$, it suffices to show
\[
\psi'(a)=16a-8+\frac{48}{\sqrt{25+32a}}>0,~\forall a\ge0.
\]
This holds because
\[
\psi''(a)=16-48\frac{16}{(\sqrt{25+32a})^{3}}
=16(1-\frac{48}{(\sqrt{25+32a})^3})\ge 16(1-\frac{48}{125})>0,~\forall a\ge0,
\]
and $\psi'(0)=\frac{8}{5}\ge0$.
\end{itemize}

\item[(ii)] By Assumption \ref{asmp:1} and \eqref{eq:case2-1-lambda}, we have
\begin{equation}
\label{eq:case2-1-H-ineq}
\lambda_{\min}(\nabla^2 f(x_k+d_k)) \geq \lambda_{\min}(H_k) - L\|d_k\| \geq -2\epsilon_E- L\|d_k\|.
\end{equation}
Using $\lambda_{\min}(\nabla^2 f(x_k+d_k)) \leq -\sqrt{L\epsilon_g}$ and \eqref{eq:case2-1-H-ineq}, we obtain
$$ -2\epsilon_E - L\|d_k\| \leq -\sqrt{L\epsilon_g}. $$
This, together with $\epsilon_E= \sqrt{L\epsilon_g}/3$, implies
\begin{equation}
\label{eq:case2-dkbd}
\|d_k\| \geq  \frac{\sqrt{L\epsilon_g}-2\epsilon_E}{L}=\frac{\epsilon_E}{L}.
\end{equation}

It follows that
\begin{equation*}
    \begin{aligned}
    -m(d_{k})&\overset{\eqref{eq:case1-mdk}}{\ge}\frac{2\sigma}{3}\|d_k\|^3 + 2\epsilon_E\|d_k\|^2 - \frac{\epsilon_E^2}{L}\|d_k\|\\
    &\overset{\eqref{eq:case2-dkbd} }{\ge}\left( 0 + 2\epsilon_E\left(\frac{\epsilon_E}{L}\right) - \frac{\epsilon_E^2}{L}\right)\|d_k\|\\
    &\overset{\eqref{eq:case2-dkbd} }{\ge}\frac{\epsilon_E^3 }{L^2}.
\end{aligned}
\end{equation*}\end{itemize}
Combining (i) and (ii), we complete the proof.
\subsection{Proofs for Lemma \ref{lem:bd2}}
In this case, $d_k$ is generated by either line 13 or line 16 of Algorithm \ref{alg:cr} (Algorithm \ref{alg:ourarc}, respectively), depending on the norm of $s_k$ returned by approximately solving  \eqref{eq:mkrt} (\eqref{eq:amkrt}, respectively). Let $\sigma=L/2$ for Algorithm \ref{alg:cr} and $\sigma=\sigma_k$ for Algorithm \ref{alg:ourarc}.
We prove the results twofold.
\begin{itemize}
\item[(i)] When $\sigma\|s_k\|+\alpha_k\ge 0$, we must have $[\sigma\|s_k\| +\alpha_k]_+=\sigma\|s_k\| + \alpha_k$.
Note that
\begin{equation}
\label{eq:lem2-cri}
\nabla \tilde m_k^r(s_k)=g_k+H_ks_k+(2\epsilon_E-\alpha_k)s_k+\left[\sigma\|s_k\|+\alpha_k\right]_+ {s_k}.
\end{equation}
Using \eqref{eq:gmkrt} and $d_k=s_k$, we obtain
\begin{equation}
\label{eq:case2-2-gd-dHd}
\begin{aligned}
&g_k^{\top}d_k + d_k^{\top}H_kd_k + (2\epsilon_E-\alpha_k)\|d_k\|^2 +\left[\sigma\|d_k\|+\alpha_k\right]_+ {\|d_k\|^2} \\
\le& \left\|g_k+H_kd_k+(2\epsilon_E-\alpha_k)d_k+\left[\sigma\|d_k\|+\alpha_k\right]_+ {d_k}\right\|\cdot\|d_k\| \\
\le& \epsilon_S\|d_k\|.
\end{aligned}
\end{equation}
Since $\alpha_k\le\lambda_{\text{min}}(H_k)+\epsilon_E$, we have $H_k-\alpha_k I+2\epsilon_EI\succeq 0$ and thus
\begin{equation}
\label{eq:case2-2-psd}
d_k^{\top}H_kd_k + (2\epsilon_E-\alpha_k)\|d_k\|^2 \ge0.
\end{equation}
Noting that $\left[\sigma\|s_k\|+\alpha_k\right]_+\ge0$, we have from \eqref{eq:case2-2-gd-dHd} and \eqref{eq:case2-2-psd} that
\begin{equation}
\label{eq:case2-2-gd}
g_k^{\top}d_k \le \epsilon_S\|d_k\|.
\end{equation}
On the other hand, according to $\sigma\|s_k\|+\alpha_k\ge 0$ and \eqref{eq:case2-2-gd-dHd}, we  have
\begin{equation}
\label{eq:case2-2-dHd}
\begin{aligned}
d_k^{\top}H_kd_k &\le -g_k^{\top}d_k+(\alpha_k-2\epsilon_E)\|d_k\|^2-\sigma\|d_k\|^3-\alpha_k\|d_k\|^2+\epsilon_S\|d_k\| \\
&= -g_k^{\top}d_k - \sigma\|d_k\|^3 - 2\epsilon_E\|d_k\|^2 + \epsilon_S\|d_k\|.
\end{aligned}
\end{equation}
Since $d_k=s_k$,  and $\alpha_k\le -\epsilon_E$, from $\sigma\|s_k\|+\alpha_k\ge 0$ we obtain
\begin{equation}
\label{eq:case2-a1} \|d_k\| \geq -\frac{\alpha_k}{\sigma} \geq \frac{\epsilon_E}{\sigma}. \end{equation}
We further have
\begin{align*}
m(d_{k}) & =g_k^{\top}d_k + \frac{1}{2}d_k^{\top}H_kd_k + \frac{\sigma}{3}\|d_k\|^3 \\
& \overset{\eqref{eq:case2-2-dHd}}{\le} g_k^{\top}d_k - \frac{1}{2}g_k^{\top}d_k - \frac{\sigma}{2}\|d_k\|^3 - \epsilon_E\|d_{k}\|^2\ + \frac{1}{2}\epsilon_S\|d_k\| + \frac{\sigma}{3}\|d_k\|^3 \\
& = \frac{1}{2}g_k^{\top}d_k + \frac{1}{2}\epsilon_S\|d_k\| - \epsilon_E\|d_k\|^2  - \frac{\sigma}{6}\|d_k\|^3\\
& \overset{\eqref{eq:case2-2-gd}}{\leq} \frac{1}{2}\epsilon_S\|d_k\| + \frac{1}{2}\epsilon_S\|d_k\| - \epsilon_E\|d_k\|^2  - \frac{\sigma}{6}\|d_k\|^3\\
& = -\frac{\sigma}{6}\|d_k\|^3 - \epsilon_E\|d_k\|^2 + \epsilon_S\|d_k\|.
\end{align*}
This gives
\begin{equation}
\label{eq:case2-mdkneg}
-m(d_{k})   \geq \frac{\sigma}{6}\|d_k\|^3 + \epsilon_E\|d_k\|^2 - \epsilon_S\|d_k\|.
\end{equation}

Thus the desired bound holds immediately for Algorithm \ref{alg:cr}.

Now consider Algorithm \ref{alg:ourarc}.
Similar to the proof for Lemma \ref{lem:bd1}, we consider two cases  $\|\nabla f(x_k+d_k)\|>\epsilon_g$, and $\|\nabla f(x_k+d_k)\|\le \epsilon_g,~\lambda_{\min}(\nabla^2 f(x_k+d_k)) < -\sqrt{L\epsilon_g}$. For the latter case, similar to case (ii) in the proof of Lemma \ref{lem:bd1}, from Lipschitz continuity of Hessian, we have $\|d_k\|\ge \epsilon_E/L$. Then due to $\epsilon_S=\epsilon_E^2/L$ in Condition \ref{con:eps}, we have $2\epsilon_E\|d_k\|-\epsilon_S\ge 0$ and thus \eqref{eq:case2-mdkneg} yields
\[
-m(d_{k})   \geq \frac{\sigma}{6}\|d_k\|^3\overset{\eqref{eq:case2-a1}}{\ge} \frac{\epsilon_E^3}{6\sigma^2}.
\]
Now consider the case $\|\nabla f(x_k+d_k)\|>\epsilon_g$, whose proof follows a similar idea to that in Lemma \ref{lem:bd1}. Since the subproblem is approximately solved, we have
\[
\|\nabla \tilde m_k^r(d_k)\|\le \epsilon_S
\]
and thus  \eqref{eq:lem2-cri}, together with $\sigma\|d_k\|+\alpha_k\ge0$,  gives
\[
\|g_k+H_kd_k+(2\epsilon_E-\alpha_k)d_k+\left[\sigma\|d_k\|+\alpha_k\right]_+ {d_k}\|
=\|g_k+H_kd_k + (2\epsilon_E+ \sigma\|d_k\|) {d_k}\|\le \epsilon_S.
\]
Hence  we have
\[
\epsilon_g\le \|\nabla f(x_k+d_k)\| \le \|g_k+H_k\| +\frac{L}{2}\|d_k\|^2\le \epsilon_S+2\epsilon_E\|d_k\|+(\sigma+\frac{L}{2})\|d_k\|^2.
\]
The due to Condition \ref{con:eps}, the above quadratic inequality gives
\[
\|d_k\|\ge \frac{-2\epsilon_E+\sqrt{4\epsilon_E^2-4(\sigma+L/2)(\epsilon_S-\epsilon_g)}}{2\sigma+L}=\frac{-2\epsilon_E+\sqrt{20\epsilon_E^2+32\sigma\epsilon_E^2/L}}{2\sigma+L}.
\]
We claim the following inequality holds
\begin{equation}
\label{eq:keyeq2}
\frac{\sigma}{6}\|d_k\|^2+ \epsilon_E\|d_k\| - \epsilon_S\ge\frac{\sigma}{6}\left(\frac{-2\epsilon_E+\sqrt{20\epsilon_E^2+32\sigma\epsilon_E^2/L}}{2\sigma+L}\right)^2+ \epsilon_E\frac{-2\epsilon_E+\sqrt{20\epsilon_E^2+32\sigma\epsilon_E^2/L}}{2\sigma+L}- \frac{\epsilon_E^2}{L}\ge \frac{\epsilon_E^2}{3L},
\end{equation}
which further gives
\[
-m(d_{k})   \geq \frac{1}{3L}\epsilon_E^2\|d_k\| \overset{\eqref{eq:case2-a1}}{\ge} \frac{1}{3\sigma L}\epsilon_E^3.
\]
Indeed, \eqref{eq:keyeq2} is equivalent to, by defining $a=\sigma/L$,
\[
\psi(a)=4a\sqrt{5+8a}-8a+3\sqrt{5+8a}-5\ge0,
\]
which holds since \[\frac{1}{8}\psi'(a)=\frac{6a+4}{\sqrt{5+8a}}-1 \ge0 ~\Longleftrightarrow~36a^2+40a+9\ge0 ~\Longleftrightarrow~9(2a+1)^2+4a\ge0 ~\forall a\ge0,
\]
 and $\psi(0)=3\sqrt5-5>0.$
\item[(ii)] When $\sigma\|s_k\|+\alpha_k< 0$, we have
$d_k=\frac{ 1}{2\sigma}w_k,~ \alpha_k=v_k^{\top}H_kv_k=\frac{w_k^{\top}H_kw_k}{w_k^{\top}w_k},~\|w_k\|=|\alpha_k|$, and $w_k^{\top}g_k\le 0 $.
 It follows that
\begin{equation}
    \label{eq:case2-3-dHd}
    d_k^{\top}H_kd_k=\frac{1}{4\sigma^2}w_k^{\top}H_kw_k=\frac{1}{4\sigma^2}\alpha_k w_k^{\top} w_k=\frac{1}{4\sigma^2}\alpha_k^3.
\end{equation}
Since $\alpha_k\le-\epsilon_E<0$, we also have
\begin{equation}
    \label{eq:case2-3-d}
    \|d_k\|^3=\frac{1}{8\sigma^3}\|w_k\|^3=\frac{1}{8\sigma ^3}|\alpha_k|^3=-\frac{1}{8\sigma ^3}\alpha_k^3.
\end{equation}
Then we have
\begin{align*}
m(d_{k}) &  = g_k^{\top}d_k + \frac{1}{2}d_k^{\top}H_kd_k + \frac{\sigma}{3}\|d_k\|^3 \\
& = \frac{1}{2\sigma}g_k^{\top}w_k + \frac{1}{8\sigma^2}\alpha_k^3  - \frac{1}{24\sigma^2}\alpha_k^3\\
& \leq \frac{1}{12\sigma^2}\alpha_k^3,
\end{align*}
where the second equality follows from \eqref{eq:case2-3-dHd} and \eqref{eq:case2-3-d}, and the inequality follows from $w_k^{\top}g_k\le 0$.
Due to $\alpha_k\le -\epsilon_E$, we have
$$ -m(d_{k})   \ge -\frac{1}{12\sigma^2}\alpha_k^3\ge \frac{1}{12\sigma^2}\epsilon_E^3.$$
\end{itemize}
Combining (i) and (ii) and noting $\sigma=L/2$ in Algorithm \ref{alg:cr} and $\sigma=\sigma_k\in (0,\max\{\sigma_0,\gamma L/2\})$ (due to Lemma \ref{lem:gamma}) and $\gamma>1$ in Algorithm \ref{alg:ourarc}, we complete the proof.

\bibliographystyle{unsrt}

\end{document}